\newtheorem{thm}{Theorem}[section]
\newtheorem{prop}[thm]{Proposition}
\newtheorem{lemma}[thm]{Lemma}
\newtheorem{conj}[thm]{Conjecture}
\newtheorem{def-prop}[thm]{Definition-Proposition}
\newtheorem{prop-def}[thm]{Proposition-Definition}
\theoremstyle{definition} 
\newtheorem{defi}[thm]{Definition}
\newcommand{\C}{{\bf C}}
\newcommand{\R}{{\bf R}}
\newcommand{\Z}{{\bf Z}}
\newcommand{\Q}{{\bf Q}}
\newcommand{\rX}{{\mathcal X}}
\newcommand{\ie}{\textit {i.e.}~}
\newcommand{\cf}{\textit {cf.}~}
\newcommand{\loccit}{\textit {loc.cit.}~}
\newcommand{\apriori}{\textit{a priori} }
\newcommand{\etc}{\textit {etc.}~}
\newcommand{\resp}{\textit {resp.}~}
\newcommand{\CH}{\mathop{\rm CH}\nolimits} 
\newcommand{\dual}{\mathop{^\vee}\nolimits} 
\newcommand{\End}{\mathop{\rm End}\nolimits} 
\newcommand{\Hom}{\mathop{\rm Hom}\nolimits}
\newcommand{\id}{\mathop{\rm id}\nolimits} 
\newcommand{\im}{\mathop{\rm Im}\nolimits} 
\renewcommand{\P}{\mathop{\bf P}\nolimits} 
\newcommand{\pr}{\mathop{\rm pr}\nolimits} 
\newcommand{\rank}{\mathop{\rm rank}\nolimits}
\newcommand{\Sing}{\mathop{\rm Sing}\nolimits} 
\newcommand{\Spec}{\mathop{\rm Spec}\nolimits}
\newcommand{\SL}{\mathop{\rm SL}\nolimits}
\renewcommand{\bar}{\overline}
\newcommand{\inj}{\hookrightarrow}
\newcommand{\lra}{\xrightarrow}
\newcommand{\cart}{\ar@{}[dr]|\square} 
\renewcommand{\dual}{^{\vee}} 
\newcommand{\isom}{\simeq} 
\renewcommand{\tilde}{\widetilde}
\newcommand{\deff}{\mathrel{:=}}
\newcommand{\age }{\mathop{\rm {age}}\nolimits}
\newcommand{\h}{\mathop{\mathfrak {h}}\nolimits} 
\newcommand{\CHM }{\mathop{\rm {CHM}}\nolimits}
\newcommand{\Irr }{\mathop{\rm {Irr}}\nolimits}
\renewcommand{\1}{\mathop{\mathds{1}}\nolimits} 
\renewcommand{\L}{\mathop{\mathds{L}}\nolimits} 
\newcommand{\Smproj }{\mathop{\rm {SmProj}}\nolimits}
\begin{document}

\title[Multiplicative McKay correspondence for surfaces]{Motivic Multiplicative McKay correspondence for surfaces}
\author{Lie Fu}
\address{Institut Camille Jordan, Universit\'e Claude Bernard Lyon 1, France}
\email{fu@math.univ-lyon1.fr}

\author{Zhiyu Tian}
\address{CNRS, Institut Fourier, Universit\'e Grenoble Alpes, France}
\email{zhiyu.tian@univ-grenoble-alpes.fr}

\thanks{Lie Fu is supported by the Agence Nationale de la Recherche (ANR) through ECOVA (ANR-15-CE40-0002) and LABEX MILYON (ANR-10-LABX-0070) of Universit\'e de Lyon. Zhiyu Tian is partially supported by the funding ``Accueil des Nouveaux Arrivants" of IDEX, Universit\'e Grenoble Alpes. Lie Fu and Zhiyu Tian are supported by ANR through HodgeFun (ANR-16-CE40-0011), and by \emph{Projet Exploratoire Premier Soutien} (PEPS) Jeunes chercheur-e-s 2016 operated by Insmi and  \emph{Projet Inter-Laboratoire} 2016 and 2017 by F\'ed\'eration de Recherche en Math\'ematiques Rh\^one-Alpes/Auvergne CNRS 3490.}

\begin{abstract}
We revisit the classical two-dimensional McKay correspondence in two respects: The first one, which is the main point of this work, is that we take into account of the multiplicative structure given by the orbifold product; second, instead of using cohomology, we deal with the Chow motives. More precisely, we prove that for any smooth proper two-dimensional orbifold with projective coarse moduli space, there is an isomorphism of algebra objects, in the category of complex Chow motives, between the motive of the minimal resolution and the orbifold motive. In particular, the complex Chow ring (\resp Grothendieck ring, cohomology ring, topological K-theory) of the minimal resolution is isomorphic to the complex orbifold Chow ring (\resp Grothendieck ring, cohomology ring, topological K-theory) of the orbifold surface. This confirms the two-dimensional \emph{Motivic Crepant Resolution Conjecture}. 
\end{abstract}
\maketitle
\section{Introduction}\label{sect:intro}

Finite subgroups of $\SL_{2}(\C)$ are classically studied by Klein \cite{MR0080930} and Du Val \cite{MR0169108}. A complete classification (up to conjugacy) is available\,: cyclic, binary dihedral, binary tetrahedral, binary octahedral and binary icosahedral. The last three types correspond to the groups of symmetries of Platonic solids\footnote{\ie regular polyhedrons in $\R^{3}$.} as the names indicate.
Let $G\subset \SL_{2}(\C)$ be such a (non-trivial) finite subgroup acting naturally on the vector space $V\deff \C^{2}$. The quotient $X\deff V/G$ has a unique rational double point\footnote{Such (isolated) surface singularities are also known as Klein, Du Val, Gorenstein, canonical, simple or A-D-E singularities according to different points of view.}. 
Let $f: Y\to X$ be the minimal resolution of singularities:
\begin{displaymath}
\xymatrix{
& V\ar[d]^{\pi}\\
Y\ar[r]^{f} & X
}
\end{displaymath}
which is a crepant resolution, that is, $K_{Y}=f^{*}K_{X}$. The exceptional divisor, denoted by $E$, consists of a union of $(-2)$-curves\footnote{\ie smooth rational curve with self-intersection number equal to $-2$.} meeting transversally.

The classical McKay correspondence (\cite{MR604577}, \cf also \cite{MR1886756}) establishes a bijection between the set $\Irr'(G)$ of non-trivial irreducible representations of $G$ on the one hand and the set $\Irr(E)$ of irreducible components of $E$ on the other hand\,:
\begin{eqnarray*}
\Irr'(G)&\isom& \Irr(E)\\
\rho&\mapsto& E_{\rho}.
\end{eqnarray*}
Thus $E=\bigcup_{\rho\in \Irr'(G)}E_{\rho}$.
Moreover, this bijection respects the `incidence relations'\,: precisely, for any $\rho_{1}\neq\rho_{2}\in \Irr'(G)$, the intersection number $(E_{\rho_{1}}\cdot E_{\rho_{2}})$, which is 0 or 1, is equal to the multiplicity of $\rho_{2}$ in $\rho_{1}\otimes V$ (hence is also equal to the multiplicity of $\rho_{1}$ in $\rho_{2}\otimes V$), where $V$ is the 2-dimensional natural representation via $G\subset \SL(V)$. All these informations can be encoded into Dynkin diagrams of A-D-E type, which is on the one hand the dual graph of the exceptional divisor $E$ and on the other hand the \emph{McKay graph} of the non-trivial irreducible representations of $G$, with respect to the preferred representation $V$. Apart from the original observation of McKay, there are many approaches to construct this correspondence geometrically and to extend it to higher dimensions\,: K-theory of sheaves \cite{MR740077}, $G$-Hilbert schemes \cite{ReidMcKayPreprint}, \cite{MR1420598}, \cite{MR1783852}, \cite{MR1714824}, motivic integration \cite{MR1672108}, \cite{MR1714818}, \cite{MR1664700}, \cite{MR1905024}, \cite{MR2027195}, \cite{MR2069013} and derived categories \cite{MR1824990} \etc We refer the reader to Reid's note of his Bourbaki talk \cite{MR1886756} for more details and history. 

Following Reid \cite{ReidMcKayPreprint}, one can recast the above McKay correspondence (the bijection) as follows: \emph{the isomorphism classes of irreducible representations index a basis of the homology of the resolution $Y$}. This is of course equivalent to say that \emph{the conjugacy classes of $G$ index a basis of the cohomology of $Y$}. The starting point of this paper is that the quotient $X=V/G$ is the coarse moduli space of a smooth orbifold/Deligne--Mumford stack $\rX\deff [V/G]$, and that  the (co)homology of the coarse moduli space $|I\rX|$ of its \emph{inertia stack} $I\rX$ has a basis indexed by the conjugacy classes of $G$. Thus Reid's McKay correspondence can be stated as an isomorphism of vector spaces:
$$H^{*}(Y)\isom H^{*}(|I\rX|).$$
Chen and Ruan defined in \cite{MR2104605} the \emph{orbifold} cohomology and the \emph{orbifold product} (\ie Chen--Ruan cohomology) for any smooth orbifold. See Definition \ref{def:OrbTheo} for a down-to-earth construction in the global quotient case. By definition,  the orbifold cohomology ring $H^{*}_{orb}([V/G])$ has $H^{*}(|I\rX|)$ as the underlying vector space. Therefore it is natural to ask whether there is a multiplicative isomorphism (of algebras)
$$H^{*}(Y)\isom H^{*}_{orb}([V/G]).$$
None of the aforementioned beautiful theories (K-theory, $G$-Hilbert schemes, motivic integration and derived categories) produces an isomorphism which respects the multiplicative structures. Nevertheless, the existence of such an isomorphism of algebras is known.  
For example, it is a baby case of the result of Ginzburg--Kaledin \cite{MR2065506} on symplectic resolutions of symplectic quotient singularities. An explicit isomorphism between the equivariant orbifold quantum cohomology of $[V/G]$ and the equivariant cohomology of its minimal resolution is proposed by Bryan--Graber--Pandharipande in \cite{MR2357679}, which is verified for the $\C^{2}/{\Z_{3}}$ case (see also the related work \cite{MR2377888}, \cite{MR2551767}). We will use the same formula to construct our multiplicative isomorphism. 

This isomorphism fits perfectly into Ruan's following more general Cohomological Crepant Resolution Conjecture (CCRC)\,:

\begin{conj}[CCRC \cite{MR2234886}]\label{conj:CCRC}
Let $M$ be a smooth projective variety endowed with a faithful action of a finite group $G$. Assume that the quotient $X\deff M/G$ is Gorenstein, then for any crepant resolution $Y\to X$, there is an isomorphism of graded $\C$-algebras:
\begin{equation}\label{eqn:CCRC}
H^{*}_{qc}(Y, \C)\isom H^{*}_{orb}\left([M/G], \C\right).
\end{equation}
More generally, given a smooth proper orbifold $\rX$ with underlying singular variety $X$ being Gorenstein, then for any crepant resolution $Y\to X$, we have an isomorphism of graded $\C$-algebras:
$$H^{*}_{qc}(Y, \C)\isom H^{*}_{orb}\left(\rX, \C\right).$$
\end{conj}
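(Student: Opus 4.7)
\medskip

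\noindent\textbf{Proof plan for Conjecture \ref{conj:CCRC}.} The plan has two layers: first establish an additive isomorphism with matching bigrading, then upgrade it to an algebra isomorphism by correcting the cup product on $Y$ with Gromov--Witten contributions of curves contracted by $f\colon Y\to X$. For the additive statement, the approach is motivic integration in the sense of Kontsevich--Denef--Loeser--Batyrev--Yasuda: the stringy $E$-function of $X$ can be computed either from $Y$ (giving the Hodge structure of $H^*(Y,\C)$, since $f$ is crepant) or from the twisted sectors of $\rX$ (giving $H^*_{orb}(\rX,\C)$ with age shifts). This already fixes an abstract graded vector space isomorphism and, in particular, the dimensions in each (Chen--Ruan bi-graded) piece match.

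For the multiplicative comparison, the plan is to write down a Bryan--Graber--Pandharipande type map explicitly. Fix a basis of $H^*_{orb}(\rX,\C)=\bigoplus_{g} H^{*-2\age(g)}(\rX_g,\C)$ running over twisted sectors $\rX_g$. Send an untwisted class $\alpha\in H^*(\rX)$ to its pullback/pushforward along $Y\to X\gets \rX$, and send a class supported on a twisted sector $\rX_g$ to an explicit class on $Y$ supported in a neighborhood of $f^{-1}\bigl(|\rX_g|\bigr)$, built from the exceptional divisors indexed by the conjugacy class of $g$ under McKay. Define the quantum-corrected product on $Y$ by
\[
\langle a,b,c\rangle_{qc}\deff \langle a,b,c\rangle_{Y}+\sum_{\beta\in\mathrm{NE}(Y/X)\setminus\{0\}}\langle a,b,c\rangle_{0,3,\beta}^{Y}\cdot q^{\beta},
\]
summing three-point genus-zero Gromov--Witten invariants along $f$-exceptional classes, and then specializing $q^{\beta}$ to explicit roots of unity dictated by the McKay correspondence. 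The claim to verify is that this map intertwines the orbifold product on $\rX$ with the quantum-corrected product $H^*_{qc}(Y)$.

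The multiplicativity check reduces, by a local-to-global argument, to the case of a formal neighborhood of a singular point of $X$, \ie to $\rX=[V/G]$ with $G\subset\SL(V)$ acting with an isolated fixed point; away from $\Sing(X)$ the map $f$ is an isomorphism and both sides coincide. Locally, a three-point structure constant on the orbifold side is a \emph{Chen--Ruan} Euler class on $V^{g_1}\cap V^{g_2}\cap V^{g_3}$ with obstruction bundle given by the age contribution. On the $Y$ side, the corresponding structure constant is a sum over effective curve classes on the exceptional fiber over the orbit of $\Fix(G)$, and the key computation is to match the generating series of these Gromov--Witten invariants with the age-weighted equivariant Euler class. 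This matching is the main technical step; for symplectic quotient singularities it follows from Ginzburg--Kaledin, for surfaces (the case of this paper) from the direct motivic computation described in the abstract, and more generally one expects it from wall-crossing in quasi-map theory together with localization on $Y$.

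The hard part will be controlling the Gromov--Witten sums on $Y$ in arbitrary dimension: their convergence at the prescribed roots of unity, and their identification with the orbifold obstruction bundle contributions, are not formal. Two further obstacles that the plan must confront are the non-uniqueness of crepant resolutions in dimension $\ge 3$ (handled either by a flop/derived-equivalence argument \`a la Bridgeland--King--Reid, or by showing both sides are invariant under flops of $Y/X$) and the fact that in the general orbifold setting (rather than a global quotient $[M/G]$) twisted sectors need not be global fixed loci, so the BGP-type formula must be formulated stack-theoretically via the inertia stack $I\rX$. In dimension two, where Gromov--Witten contributions come only from chains of $(-2)$-curves and the McKay bijection is explicit, this program is carried out by the motivic argument of the present paper; the full CCRC remains conjectural beyond low dimensions and special classes of singularities.
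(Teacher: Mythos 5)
What you have written is a research program, not a proof, and the statement itself is an open conjecture that the paper never claims to prove: Conjecture \ref{conj:CCRC} is quoted from Ruan as motivation, and the paper establishes only its two-dimensional instance (Theorem \ref{thm:main}), at the motivic level. The genuine gap in your proposal is exactly the step you yourself flag as ``the main technical step'': the identification, in arbitrary dimension, of the generating series of genus-zero Gromov--Witten invariants of $Y$ along $f$-contracted classes (specialized at prescribed roots of unity) with the orbifold structure constants coming from the obstruction bundle on the twisted sectors. No mechanism is offered for proving this matching, for making sense of the convergence/specialization of the quantum parameters, or for showing independence of the chosen crepant resolution in dimension $\geq 3$; these are precisely the open content of CCRC, so the plan cannot be counted as a proof of the stated conjecture. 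The additive step via motivic integration is also weaker than claimed: stringy invariants match Hodge--Deligne $E$-polynomials, hence dimensions, but do not by themselves produce the canonical graded isomorphism that the multiplicative comparison is then supposed to refine.

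It is also worth recording how differently the paper handles the only case it does prove. In dimension two there is no evaluation of Gromov--Witten series at roots of unity at all: Lemma \ref{lemma:qcVanish} shows the relevant virtual fundamental classes vanish outright (the virtual dimension of $\overline{M_{0,0}}(Y,\beta)$ is $-1$ for contracted $\beta$), so $H^{*}_{qc}(Y)$ is just the ordinary cup product and your picture of contributions ``from chains of $(-2)$-curves'' plays no role. The multiplicative identification is then made by the explicit Bryan--Graber--Pandharipande-type matrix with entries $\frac{1}{\sqrt{|G_{x}|}}\sqrt{\chi_{\rho_{0}}(g)-2}\,\chi_{\rho}(g)$, whose compatibility with the products is a character-orthogonality computation (Proposition \ref{prop:Multiplicativity}), whose additive bijectivity rests on the De Cataldo--Migliorini semismall decomposition of $\h(Y)$ and the nondegeneracy of the trimmed character table (Proposition \ref{prop:AddIsom}, Lemma \ref{lemma:CharTable}), and which is carried out for Chow motives, a statement strictly stronger than the cohomological one. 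If your goal is the surface case, this elementary route replaces your entire local Gromov--Witten matching; if your goal is the general conjecture, the missing matching remains unproved and your text should present the plan as such rather than as a proof.
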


Here the left hand side is the \emph{quantum corrected} cohomology algebra, whose underlying graded vector space  is just $H^{*}(Y, \C)$, endowed with the cup product with quantum corrections related to Gromov--Witten invariants with curve classes contracted by the crepant resolution, as defined in \cite{MR2234886}. Since we only consider in this paper the two-dimensional situation, the Gromov--Witten invariants always vanish hence there are no quantum corrections involved. See Lemma  \ref{lemma:qcVanish} for this vanishing. 

Conjecture \ref{conj:CCRC} suggests that one should consider the existence of such multiplicative McKay correspondence in the global situation (instead of a quotient of a vector space by a finite group), that is, a Gorenstein quotient of a surface by a finite group action, or even more generally a two-dimensional proper Gorenstein orbifold.
Our following main result confirms this, which also pushes the (surface) McKay correspondence to the motivic level:

\begin{thm}[Motivic multiplicative global McKay correspondence]\label{thm:main}
Let $\rX$ be a smooth proper two-dimensional Deligne--Mumford stack with isolated stacky points. Assume that $\rX$ has projective coarse moduli space $X$ with Gorenstein singularities. Let $Y\to X$ be the minimal resolution. Then we have an isomorphism of algebra objects in the category $\CHM_{\C}$ of Chow motives with complex coefficients:
\begin{equation}\label{eqn:main}
\h(Y)_{\C}\isom \h_{orb}\left(\rX\right)_{\C}.
\end{equation}
In particular, one has an isomorphism of $\C$-algebras:
\begin{eqnarray*}
\CH^{*}(Y)_{\C} &\isom& \CH^{*}_{orb}\left(\rX\right)_{\C};\\
H^{*}(Y, \C) &\isom& H^{*}_{orb}\left(\rX, \C\right);\\
K_{0}(Y)_{\C} &\isom& K_{orb}\left(\rX\right)_{\C};\\
K^{top}(Y)_{\C} &\isom& K^{top}_{orb}\left(\rX\right)_{\C}.
\end{eqnarray*}
\end{thm}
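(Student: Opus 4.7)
\medskip

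\noindent\textbf{Proof plan.} The plan is to reduce the global statement to a local computation at each stacky point and then construct the multiplicative isomorphism explicitly via a Chow correspondence, following the recipe of Bryan--Graber--Pandharipande.

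First, I would exploit the assumption that the stacky locus of $\rX$ is isolated. Since $X$ is Gorenstein and two-dimensional, each stacky point has stabilizer conjugate to a finite subgroup $G\subset\SL_{2}(\C)$, and an \'etale neighborhood of the point looks like $[\C^{2}/G]$; the corresponding singularity of $X$ is a rational double point (A-D-E), and the minimal resolution $f:Y\to X$ is crepant with exceptional divisor an A-D-E configuration of $(-2)$-curves. Over the complement of the stacky points, $\rX$, $X$, and $Y$ all coincide. This suggests decomposing both sides of \eqref{eqn:main} as $\h(X)\oplus(\text{purely local contributions})$. On the orbifold side, by the definition of $\h_{orb}(\rX)$ (which will be recalled in Definition~\ref{def:OrbTheo}) the local contribution at each stacky point is a sum of Tate motives $\L^{\age(g)}$ indexed by the non-trivial conjugacy classes $[g]$ of the local stabilizer, since the fixed locus of a non-trivial $g\in G\subset\SL_{2}$ acting on $\C^{2}$ is just the origin. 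On the resolution side, $\h(Y)\isom\h(X)\oplus\bigoplus_{\rho}\L\cdot[E_{\rho}]$, where the sum runs over the non-trivial irreducible representations of the local stabilizers, thanks to the exceptional $(-2)$-curves.

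Next, I would build the correspondence realizing \eqref{eqn:main}. The classical McKay bijection $\Irr'(G)\isom\Irr(E)$ combined with the character-theoretic pairing between $\Irr(G)$ and conjugacy classes of $G$ suggests defining, for each non-trivial conjugacy class $[g]$, a cycle in $Y\times |I\rX|$ of the form
\begin{equation*}
T_{[g]} \;=\; \sum_{\rho\in\Irr'(G)} c_{\rho,[g]}\,[E_{\rho}\times\{[g]\}],
\end{equation*}
with explicit coefficients $c_{\rho,[g]}$ given by a normalized character formula (this is the formula of \cite{MR2357679} used for the $\C^{2}/\Z_{3}$ case). Combined with the identity on $\h(X)$, these correspondences assemble into a morphism $\h_{orb}(\rX)_{\C}\to\h(Y)_{\C}$, whose invertibility reduces to the non-degeneracy of the character pairing, i.e.\ the orthogonality relations of $G$. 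The key point is that the image of the twisted sector indexed by $[g]$ is a Tate motive of the correct weight $\age(g)$ because the age of $g\in\SL_{2}$ equals $1$ for every non-trivial $g$, matching the Tate twist of the $(-2)$-curve classes.

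Finally I would check multiplicativity, which is the main obstacle. The cup product on $\h(Y)$ reduces to the intersection form on the exceptional curves (quantum corrections vanish in the surface case by Lemma~\ref{lemma:qcVanish}), and this intersection form is exactly the (negative) Cartan matrix of the A-D-E Dynkin diagram. The orbifold product involves a 3-point evaluation with an obstruction bundle contribution; in dimension two, the obstruction bundle is explicit because the fixed loci reduce to points and the age shifts are $0$ or $1$. For the local model $[\C^{2}/G]$ this reduces the multiplicativity check to a finite combinatorial identity between the structure constants $\sum_{[g_{1}][g_{2}]=[g_{3}]}\chi_{\rho}(g_{1})\chi_{\rho}(g_{2})\overline{\chi_{\rho}(g_{3})}$-type sums coming from the BGP formula, and intersection numbers $(E_{\rho_{i}}\cdot E_{\rho_{j}})$ encoded by the McKay incidence; the latter is precisely the multiplicity of $\rho_{j}$ in $\rho_{i}\otimes V$. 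Since the global orbifold product and the global cup product are both determined by their local contributions at the stacky points plus the action of $\CH^{*}(X)$ (which is preserved trivially by the correspondence), the local verification suffices. The main technical work is therefore the explicit matching of BGP-type structure constants with the $\widehat{A}$-$D$-$E$ intersection pattern, together with care about signs, roots of unity, and the translation from $\C$-coefficients to motivic Tate twists.
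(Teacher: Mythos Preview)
Your proposal is correct and follows essentially the same route as the paper: decompose both sides as $\h(X)$ plus local Lefschetz motives indexed respectively by $\Irr'(G_x)$ and by non-trivial conjugacy classes, define the correspondence by the Bryan--Graber--Pandharipande character matrix $\frac{1}{\sqrt{|G_x|}}\sqrt{\chi_{\rho_0}(g)-2}\,\chi_\rho(g)$, and verify multiplicativity and invertibility by a local computation. Two small points the paper makes explicit that you leave vague: first, the obstruction bundle on a point has rank $1$ whenever $gh\neq\id$ (and rank $0$ when $gh=\id$), so the twisted-sector orbifold product collapses to $e_{x,g}\cdot e_{x,h}=\delta_{g,h^{-1}}\,[x]$ rather than a general fusion-type sum over $[g_1][g_2]=[g_3]$, which is what makes the multiplicativity check a two-line character identity; second, invertibility is not literally ``orthogonality relations'' but the non-degeneracy of the character table with the trivial row and identity column removed, which the paper deduces either from the multiplicativity itself (the map respects non-degenerate forms) or from a short separate lemma.
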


This result also confirms the 2-dimensional case of the so-called \emph{Motivic HyperK\"ahler Resolution Conjecture} studied in \cite{MHRCKummer} and \cite{MHRCK3}. See \S \ref{subsect:Motives} for the basics of Chow motives. 

As the definitions of the orbifold theories are particularly explicit and elementary for the global quotient stacks (\cf \S\ref{subsect:OrbQuo}), we deliberately treat the global quotient case (\S\ref{sect:ProofQuotient}) and the general case (\S\ref{sect:ProofGeneral}) separately.

\noindent\textbf{Convention :} All Chow rings and K-theories are with rational coefficients unless otherwise stated. $\CHM$ is the category of Chow motives with rational coefficients and $\h: \Smproj^{op}\to \CHM$ is the (contra-variant) functor that associates a smooth projective variety its Chow motive (\S \ref{subsect:Motives}). An \emph{orbifold} means a separated Deligne--Mumford stack of finite type with trivial stabilizer at the generic point. We work over an algebraically closed field of characteristic zero. 

\noindent\textbf{Acknowledgement :} The authors want to thank Samuel Boissi\`ere, C\'edric Bonnaf\'e, Philippe Caldero, J\'er\^ome Germoni and Dmitry Kaledin for interesting discussions and also the referee for his or her very helpful suggestions. The most part of the paper is prepared when Lie Fu is staying with his family at the Hausdorff Institute of Mathematics for the 2017 trimester program on K-theory. He thanks Bonn University for providing the perfect working condition in HIM and the relaxing living style in such a beautiful city.

\section{Crepant resolution conjecture}
Let us give the construction of the \emph{orbifold Chow motive} (as an algebra object) and the \emph{orbifold Chow ring}. we will first give the down-to-earth definition for an orbifold which is a global Gorenstein quotient by a finite group\,; then we invoke the techniques in \cite{MR2450211} to give the construction in the general setting of Deligne--Mumford stacks.  We refer to our previous work \cite{MHRCKummer} (joint with Charles Vial), \cite{MHRCK3} as well as the original sources (for cohomology and Chow rings) \cite{MR2104605}, \cite{MR1971293}, \cite{MR2450211}, \cite{MR2285746} for the history and more details. For the convenience of the reader, we start with a reminder on the basic notions of Chow motives. 

\subsection{The category of Chow motives}\label{subsect:Motives}
The idea of (pure) \emph{motives}, proposed initially by Grothendieck, is to construct a universal cohomology theory $X\mapsto \h(X)$ for smooth projective varieties. His construction uses directly the algebraic cycles on the varieties together with some natural categorical operations. On the one hand, motives behave just like the classically considered Weil cohomology theories\,; on the other hand, they no longer take values in the category of vector spaces but in some additive idempotent-complete rigid symmetric mono\"idal category. Although the construction works for any adequate equivalence relation on algebraic cycles, we use throughout this paper the finest one, namely the \emph{rational} equivalence, so that our results will hold for Chow groups and imply the other analogous ones, on cohomology for instance, by applying appropriate realization functors. 

Fix a base field $k$. The category of \emph{Chow motives} over the field $k$ with rational coefficients, denoted by $\CHM$, is defined as follows (\cf \cite{MR2115000} for a more detailed treatment). An object, called a \emph{Chow motive}, is a triple $(X, p, n)$, where $n$ is an integer, $X$ is a smooth projective variety over $k$ and $p\in\CH^{\dim X}(X\times X)$ is a projector, that is, $p\circ p=p$ as correspondences. Morphisms between two objects $(X, p, n)$ to $(Y, q, m)$ form the following $\Q$-vector subspace of $\CH^{m-n+\dim X}(X\times Y)$\,:
$$\Hom_{\CHM}\left((X, p, n), (Y, q, m)\right):=q\circ\CH^{m-n+\dim X}(X\times Y)\circ p.$$
The composition of morphisms are defined by the composition of correspondences.
We have the following naturally defined contra-variant functor from the category of smooth projective varieties to the category of Chow motives\,:
\begin{eqnarray*}
 \h: \Smproj^{op}&\to& \CHM\\
 X&\mapsto& (X,\Delta_{X}, 0)\\
(f\colon X\to Y) &\mapsto & {}^{t}\Gamma_{f}\in \CH^{\dim X}(Y\times X)
\end{eqnarray*}
where ${}^{t}\Gamma_{f}$ is the transpose of the graph of $f$. The image $\h(X)$ is called the Chow motive of $X$.

The category $\CHM$ is additive with direct sum induced by the disjoint union of varieties. By construction, $\CHM$ is idempotent complete (\ie pseudo-abelian)\,: for any motive $M$ and any projector of it, that is, $\phi\in \End_{\CHM}(M)$ such that $\phi\circ \phi=\phi$, we have $M\cong \im(\phi)\oplus \im(\id_{M}-\phi)$. As an example, let us recall the definition of the so-called \emph{reduced motive}: for a smooth projective variety $X$ together with a chosen $k$-rational point $x$, the composition of $\Spec k\xrightarrow{x} X$ with the structure morphism $X\to \Spec k$ is identity, hence defines a projector $\phi$ of $\h(X)$. The reduced motive of the pointed variety $(X, x)$, denoted by $\tilde{\h}(X)$, is by definition $\im(\id_{\h(X)}-\phi)$. One can show that the isomorphism class of $\tilde{\h}(X)$ is independent of the choice of the point $x$ (\cf \cite[Exemple 4.1.2.1]{MR2115000}).

There is also a natural symmetric mono\"idal structure on $\CHM$, compatible with the additive structure, given by 
$$(X, p, n)\otimes (Y, q, m):=(X\times Y, p\times q, n+m).$$ Hence the \emph{K\"unneth formula}: $\h(X\times Y)\cong \h(X)\otimes \h(Y)$ holds for any smooth projective varieties $X$ and $Y$. 
Moreover, this tensor category is \emph{rigid}, with the following duality functor
$$(X, p, n)^{\vee}:=(X, {}^{t}p, \dim X-n).$$ Given an integer $n$, the motive $(\Spec k, \Delta_{\Spec k}, n)$ is called the $n$-th \emph{Tate motive} and is denoted by $\1(n)$. They are the tensor invertible objects. For any motive $M$, the tensor product $M\otimes \1(n)$ is denoted by $M(n)$ and called the $n$-th \emph{Tate twist} of $M$. In particular, we have the \emph{Poicar\'e duality}: $\h(X)^{\vee}\cong \h(X)(\dim X)$ for any smooth projective variety $X$.
The \emph{Lefschetz motive} is $\L:=\1(-1)$. 
One checks that the reduced motive of $\P^1$ is isomorphic to $\L$.

The functor $\h$ is considered as a cohomology theory and it is universal in the sense that any Weil cohomology theory must factorize through $\h$. We can extend the notion of Chow groups from varieties to motives by defining for any integer $i$ and any Chow motive $M$, $$\CH^{i}(M):=\Hom_{\CHM}\left(\1(-i), M\right),$$ Hence the Chow groups of a smooth projective variety $X$ is recovered as $\CH^{i}(X)=\CH^{i}\left(\h(X)\right).$

In all the above constructions, one can replace for the coefficient field the rational numbers by the complex numbers and obtain the category of complex Chow motives $\CHM_{\C}$.
\subsection{Orbifold theory : global quotient case}\label{subsect:OrbQuo}

 Let $M$ be a smooth projective variety and $G$ be a finite group acting faithfully on $M$. Assume that $G$ preserves locally the canonical bundle\,: for any $x\in M$ fixed by $g\in G$, the differential $Dg\in \SL(T_{x}M)$. This amounts to require that the quotient $X\deff M/G$ has only Gorenstein singularities. Denote by $M^{g}=\left\{x\in M~\mid~ gx=x\right\}$ the fixed locus of $g\in G$, $M^{\langle g,h \rangle}=M^{g}\cap M^{h}$ (with the reduced scheme structure) and $\rX\deff [M/G]$ the quotient smooth Deligne--Mumford stack.
 
\begin{defi}[Orbifold theories]\label{def:OrbTheo}
We define an auxiliary algebra object $\h(M, G)$ in $\CHM$ with $G$-action, and the orbifold motive $\h([M/G])$ will be its subalgebra of invariants. The definitions for Chow rings, cohomology and K-theory are similar.
\begin{enumerate}[$(1^{\circ})$]
\item For any $g\in G$, the \emph{age function}, denoted by $\age(g)$, is a $\Z$-valued locally constant function on $M^{g}$, whose value on a connected component $Z$ is $$\age(g)|_{Z}\deff \sum_{j=0}^{r-1} \frac{j}{r} \rank(W_{j}),$$
where $r$ is the order of $g$, $W_{j}$ is the eigen-subbundle of the restricted tangent bundle $TM|_{Z}$, for the natural automorphism induced by $g$, with eigenvalue $e^{\frac{2\pi i}{r}j}$. The age function is invariant under conjugacy.
\item We endow the direct sums $$\h(M, G)\deff \bigoplus_{g\in G} \h(M^{g})(-\age(g))$$
$$\CH^{*}(M, G)\deff \bigoplus_{g\in G} \CH^{*-\age(g)}(M^{g})$$ 
$$H^{*}(M, G)\deff \bigoplus_{g\in G} H^{*-2\age(g)}(M^{g})$$
$$K(M, G)\deff \bigoplus_{g\in G} K_{0}(M^{g})_{\Q}$$
$$K^{top}(M, G)\deff \bigoplus_{g\in G} K^{top}(M^{g})_{\Q}$$
with the natural $G$-action induced by the following action: for any $g, h\in G$,
\begin{eqnarray*}
h: M^{g}&\lra{\isom}& M^{hgh^{-1}}\\
x&\mapsto& hx.
\end{eqnarray*}
\item For any $g\in G$, define 
\begin{equation*}
V_{g}\deff\sum_{j=0}^{r-1}\frac{j}{r}[W_{j}]\in K_{0}(M^{g})_{\Q},
\end{equation*}
whose virtual rank is $\age(g)$, where $r$ and $W_{j}$ 's are as in $(1^{\circ})$.\\
\item For any $g_{1}, g_{2}\in G$, let $g_{3}=g_{2}^{-1}g_{1}^{-1}$, we define the (virtual class of )  the \emph{obstruction bundle} on the fixed locus $M^{\langle g_{1}, g_{2} \rangle}$ by 
\begin{equation}\label{eqn:ObsBun}
F_{g_{1}, g_{2}}\deff \left.V_{g_{1}}\right\vert_{M^{<g_{1}, g_{2}>}}+ \left.V_{g_{2}}\right\vert_{M^{<g_{1}, g_{2}>}}+ \left.V_{g_{3}}\right\vert_{M^{<g_{1}, g_{2}>}}+TM^{<g_{1},g_{2}>}-\left.TM\right\vert_{M^{<g_{1},g_{2}>}}  \in K_{0}\left(M^{<g_{1}, g_{2}>}\right)_{\Q}.
\end{equation}
\item The \emph{orbifold product} $\star_{orb}$ is defined as follows: given $g, h\in G$, let $\iota: M^{<g,h>}\inj M$ be the natural inclusion. 
\begin{itemize}
\item For cohomology: 
\begin{eqnarray*}
\star_{orb}: H^{i-2\age(g)}(M^{g}) \times H^{j-2\age(h)}(M^{h}) &\to& H^{i+j-2\age(gh)}(M^{gh})\\ 
(\alpha, \beta)&\mapsto&  \iota_{*}\left(\alpha|_{M^{<g,h>}}\smile\beta|_{M^{<g,h>}}\smile c_{top}(F_{g,h})\right)
\end{eqnarray*}
\item For Chow groups: 
\begin{eqnarray*}
\star_{orb}: \CH^{i-\age(g)}(M^{g}) \times \CH^{j-\age(h)}(M^{h}) &\to& \CH^{i+j-\age(gh)}(M^{gh})\\ 
(\alpha, \beta)&\mapsto&  \iota_{*}\left(\alpha|_{M^{<g,h>}}\cdot\beta|_{M^{<g,h>}}\cdot c_{top}(F_{g,h})\right)
\end{eqnarray*}
\item For K-theory: 
\begin{eqnarray*}
\star_{orb}: K_{0}(M^{g})_{\Q} \times K_{0}(M^{h})_{\Q} &\to& K_{0}(M^{gh})_{\Q}\\ 
(\alpha, \beta)&\mapsto&  \iota_{!}\left(\alpha|_{M^{<g,h>}}\cdot\beta|_{M^{<g,h>}}\cdot \lambda_{-1}(F_{g,h}\dual)\right)
\end{eqnarray*}
where $\lambda_{-1}$ is obtained from the Lambda operation $\lambda_{t}\colon K_{0}(M^{<g,h>})\to K_{0}(M^{<g,h>})\llbracket t\rrbracket$ by evaluating  $t=-1$ (\cf \cite[Chapter II, \S 4]{MR3076731}).
The definition for topological K-theory is similar.
\item For motives:  $\star_{orb}: \h(M^{g})(-\age(g))\otimes \h(M^{h})(-\age(h))\to \h(M^{gh})(-\age(gh))$ is determined by the correspondence
\begin{equation*}
\delta_{*}(c_{top}(F_{g,h}))\in \CH^{\dim M^{g}+\dim M^{h}+\age(g)+\age(h)-\age(gh)}(M^{g}\times M^{h}\times M^{gh}),
\end{equation*}
where $\delta: M^{<g, h>}\to M^{g}\times M^{h}\times M^{gh}$ is the natural morphism sending $x$ to $(x,x,x)$.
\end{itemize}

\item Finally, we take the subalgebra of invariants whose existence is guaranteed by the idempotent completeness of $\CHM$ (see \S\ref{subsect:Motives}) :
$$\h_{orb}\left([M/G]\right)\deff \h\left(M, G\right)^{G};$$
$$\CH^{*}_{orb}\left([M/G]\right)\deff \left(\CH^{*}(M, G), \star_{orb}\right)^{G};$$
and similarly $$H^{*}_{orb}\left([M/G]\right)\deff \left(H^{*}(M, G), \star_{orb}\right)^{G};$$
$$K_{orb}\left([M/G]\right)\deff \left(K(M, G), \star_{orb}\right)^{G};$$
$$K^{top}_{orb}\left([M/G]\right)\deff \left(K^{top}(M, G), \star_{orb}\right)^{G}.$$
These are commutative $\Q$-algebras and depend only on the stack $[M/G]$ (not the presentation).
\end{enumerate}
\end{defi}

\subsection{Orbifold theory\,: general case}\label{subsect:GeneralOrbTheory}
Let $\rX$ be a smooth proper orbifold with projective coarse moduli space $X$ with Gorenstein singularities. Recall that under the Gorenstein assumption, the age function takes values in integers.
Define the \emph{orbifold Chow motive} and \emph{orbifold Chow group} as follows:
$$\h_{orb}(\rX):=\h(I\rX)(-\age):=\oplus_{i}\h(I\rX_{i})(-\age_{i}),$$ 
$$\CH^{*}_{orb}(\rX):= \CH^{*-\age}(I\rX):=\oplus_{i}\CH^{*-\age_{i}}(I\rX_{i})\,;$$
where the theory of Chow ring (with rational coefficients) as well as the intersection theory of a stack is the one developed by Vistoli in \cite{MR1005008}\,; the theory of Chow motives for smooth proper Deligne--Mumford stacks is the so-called \emph{DMC motives}\footnote{DMC stands for Deligne--Mumford Chow.} developed by Behrend--Manin in \cite{MR1412436} and reviewed in To\"en \cite[\S 2. First construction]{MR1784411}, which is proven in \cite[Theorem 2.1]{MR1784411} to be equivalent to the usual category of Chow motives\,; $I\rX=\coprod_{i} I\rX_{i}$ is the decomposition into connected components while the \emph{age function} $\age$ is the locally constant function whose value on $I\rX_{i}$ is $\age_{i}$ which is Chen--Ruan's \emph{degree shifting number} defined in \cite[\S 3.2]{MR2104605}. Let us also point out that To\"en's second construction in  \cite[\S 3]{MR1784411} of Chow motives of Delign--Mumford stacks is very close to the orbifold Chow motive defined above with the only difference being the age-shifting.

Now the key point is to put a product structure on $\h_{orb}(\rX)$ and $\CH^{*}_{orb}(\rX)$. 
 Consider the moduli space $K_{0,3}(\rX, 0)$, constructed by Abramovich--Vistoli \cite{MR1862797}, of 3-pointed \emph{twisted} stable maps of genus zero with trivial curve class. It comes equipped with a virtual fundamental class $[K_{0,3}(\rX, 0)]^{vir}\in \CH_{\dim X}\left(K_{0,3}(\rX, 0)\right)$ together with three (proper) evaluation maps: $e_{i}: K_{0,3}(\rX, 0)\to I\rX$ with target being the inertia stack (\cite{MR2450211}). Note that in general, the evaluation morphism has target in a different stack, the rigidified cyclotomic inertial stack (\cite[Section 3.4]{MR2450211}). However, in the smooth orbifold case, one can prove that the evaluation morphisms of the degree $0$ twisted stable maps land in the inertial stack \cite[Section 1.3.1]{EJKDuke}.
 
  Pushing forward the virtual fundamental class gives the class
$$\gamma:=(e_{1}, e_{2}, \check e_{3})_{*}\left([K_{0,3}(\rX, 0)]^{vir}\right)\in \CH_{\dim X}(I\rX^{3}),$$ where $\check e_{3}$ is the composition of the evaluation map $e_{3}$ and the involution $I\rX\to I\rX$ inverting the group element (\cf \cite{MR2450211}); and we are using again Vistoli's Chow groups (\cite{MR1005008}). The \emph{orbifold product} for the orbifold Chow ring is defined as the action of the correspondence $\gamma$:
\begin{eqnarray*}
\CH^{*}_{orb}(\rX) \times & \CH^{*}_{orb}(\rX)\to & \CH^{*}_{orb}(\rX)\\
\parallel &&\parallel\\
\CH^{*-\age}(I\rX)\times& \CH^{*-\age}(I\rX)  \to &\CH^{*-\age}(I\rX)\\
(\alpha,\beta)&\mapsto&  \pr_{3,*}\left(\pr_{1}^{*}(\alpha)\cdot \pr_{2}^{*}(\beta)\cdot \gamma\right)
\end{eqnarray*}
It can be checked (\cf \cite[Theorem 7.4.1]{MR2450211}) that the age shifting makes the above orbifold product  additive with respect to the degrees (otherwise, it is not!).
Similarly, we can define the multiplicative structure on $\h_{orb}(\rX)$ to be
\begin{eqnarray*}
\gamma\in \CH_{\dim X}(I\rX^{3})&=&\Hom_{\CHM}\left(\h(I\rX)(-\age)\otimes \h(I\rX)(-\age), \h(I\rX)(-\age)\right)\\
&=&\Hom_{\CHM}\left(\h_{orb}(\rX)\otimes \h_{orb}(\rX), \h_{orb}(\rX)\right).
\end{eqnarray*}

Thanks to \cite[Theorem 7.4.1]{MR2450211}, this product structure is associative. On the other hand, when $\rX$ is a finite group global quotient stack, the main result of \cite[\S 8]{MR2285746} implies that the elementary construction in \S \ref{subsect:OrbQuo} actually recovers the above abstract construction.

\subsection{Crepant resolution conjectures}

With orbifold theories being defined, we can speculate that a motivic or K-theoretic version of the Crepant Resolution Conjecture \ref{conj:CCRC} should hold. But the problem is that in the definition of the \emph{quantum corrections}, there is the subtle convergence property which is difficult to make sense in general for Chow groups / motives or for K-theory. Therefore, we will look at some cases that these quantum corrections actually vanish \apriori\,:

\paragraph{\textbf{Case 1: Hyper-K\"ahler resolution}}

 The first one is when the resolution $Y$ is holomorphic symplectic, which implies that all (Chow-theoretic, K-theoretic or cohomological) Gromov--Witten invariants vanish (see the proof of \cite[Lemma 8.1]{MHRCK3}). In this case, we indeed have the following \emph{Motivic Hyper-K\"ahler Resolution Conjecture} (MHRC), proposed in \cite{MHRCKummer}:
\begin{conj}[MHRC \cite{MHRCKummer}, \cite{MHRCK3}]\label{conj:MHRC}
Let $M$ be a smooth projective holomorphic symplectic variety endowed with a faithful symplectic action of a finite group $G$. If the quotient $X\deff M/G$ has a crepant resolution $Y\to X$, then there is an isomorphism of algebra object in the category $\CHM_{\C}$ of complex Chow motives:
$$\h(Y)\isom \h_{orb}([M/G]).$$
In particular, we have an isomorphism of graded $\C$-algebras:
\begin{equation*}
\CH^{*}(Y)_{\C}\isom \CH^{*}_{orb}\left([M/G]\right)_{\C}.
\end{equation*}
\end{conj}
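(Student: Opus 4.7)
The plan is to adapt the explicit motivic construction used in the proof of Theorem \ref{thm:main} to arbitrary dimension, replacing the ADE classification of two-dimensional Gorenstein singularities by the much richer local structure theory of symplectic singularities due to Beauville, Kaledin and Namikawa. Concretely, I want to produce an explicit correspondence
\[
\Gamma \;=\; \sum_{[g]} \Gamma_{g} \;\in\; \bigoplus_{[g]} \CH^{*}\bigl(M^{g} \times Y\bigr)_{\C}
\]
indexed by conjugacy classes of $G$, realising the candidate motivic isomorphism $\h_{orb}([M/G])_{\C} \to \h(Y)_{\C}$. Each $\Gamma_{g}$ should live naturally in the age-shifted piece corresponding to $\h(M^{g})(-\age(g))$ and be built from the geometry of the fixed locus $M^{g}$ together with characteristic classes of the age bundles $V_{g}$ and obstruction bundles $F_{g,h}$ from Definition \ref{def:OrbTheo}, following the Bryan--Graber--Pandharipande template cited in the introduction.

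To prove that $\Gamma$ is an isomorphism, I would reduce to a local statement. By Kaledin's work, the symplectic quotient $X = M/G$ admits a finite stratification by symplectic leaves whose transverse slices are themselves conical symplectic quotient singularities $\C^{2n}/\Gamma$, and Luna's \'etale slice theorem realises this stratification \'etale-locally on $M$. Combining this with a deformation to the normal cone at the level of motives, one should degenerate the pair $(Y \to X)$ fibrewise into a disjoint union of local models $\tilde{W} \to \C^{2n}/\Gamma$, and then assemble the global motivic decomposition by an additivity argument, in the spirit of the Bialynicki-Birula decomposition of symplectic resolutions and its Kaledin--Lehn refinements.

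For the local models, Theorem \ref{thm:main} supplies the two-dimensional base case, while in higher dimensions the most tractable families are those with a combinatorial model: Hilbert schemes of ADE surfaces (via Haiman and the Lehn--Sorger product on $\Sym^{n}$), generalised Kummer varieties, and, more generally, Nakajima quiver varieties, where Heisenberg-type actions and tautological correspondences furnish enough generators to match orbifold and ordinary classes. Multiplicativity of $\Gamma$ for the orbifold product $\star_{orb}$ is then checked on generators against the intersection product on $Y$; the essential simplification is that for a holomorphic symplectic resolution all Gromov--Witten invariants vanish, generalising Lemma \ref{lemma:qcVanish}, so no quantum corrections enter and one is reduced to a classical equality of correspondences.

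The main obstacle will be exactly this multiplicativity step in full generality. The derived McKay correspondence of Bridgeland--King--Reid and Bezrukavnikov--Kaledin already yields a Fourier--Mukai kernel that induces an additive motivic isomorphism, and the cohomological MHRC is a theorem of Ginzburg--Kaledin, but neither approach is manifestly compatible with the orbifold product defined through obstruction bundles and virtual classes of the moduli stacks $K_{0,3}(\rX, 0)$. Upgrading an additive motivic McKay correspondence to a multiplicative one in arbitrary dimension appears to require either a motivic refinement of the Ginzburg--Kaledin argument, or a case-by-case treatment of the known families of symplectic resolutions, together with genuinely new input to glue multiplicative isomorphisms across strata where several local models collide.
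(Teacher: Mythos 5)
This statement is Conjecture \ref{conj:MHRC}, and the paper does not prove it: it is an open conjecture quoted from \cite{MHRCKummer} and \cite{MHRCK3}, with the paper only establishing the two-dimensional case (Theorem \ref{thm:main}) and citing those two works for the known higher-dimensional cases (Hilbert schemes of abelian surfaces, generalized Kummer varieties, Hilbert schemes of K3 surfaces), which are handled there by explicit global computations with tautological classes and the Lehn--Sorger/Fantechi--G\"ottsche type models, not by any local-to-global reduction. Your text is therefore not a proof but a research programme, and you say so yourself: the ``main obstacle'' you identify --- upgrading an additive correspondence to one compatible with $\star_{orb}$ in arbitrary dimension --- is precisely the open content of the conjecture, so the proposal stops exactly where a proof would have to begin.

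Beyond that, several of the intermediate reductions are themselves unjustified. The degeneration of $(Y\to X)$ into transverse local models $\tilde{W}\to \C^{2n}/\Gamma$ via Luna slices and deformation to the normal cone is only formulated additively; the orbifold product involves obstruction bundles $F_{g,h}$ supported on intersections $M^{\langle g,h\rangle}$ of fixed loci that meet several strata at once, and no mechanism is given for why a stratumwise isomorphism would glue multiplicatively --- in the surface case of Theorem \ref{thm:main} this issue is invisible because the stacky points are isolated and the local algebras are finite-dimensional, which is exactly what fails in higher dimension. The claim that the cohomological MHRC ``is a theorem of Ginzburg--Kaledin'' overstates \cite{MR2065506}, which treats symplectic resolutions of linear quotients $V/G$, not global projective quotients $M/G$; and a Bridgeland--King--Reid or Bezrukavnikov--Kaledin derived equivalence is not known to induce an isomorphism of Chow motives (that is itself a conjecture of Orlov type), so even the additive input you take for granted is unavailable at the motivic level. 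Finally, the vanishing of quantum corrections (the analogue of Lemma \ref{lemma:qcVanish}) only removes the quantum deformation from the \emph{statement}; it does not by itself produce the classical equality of correspondences you would still have to verify on each local model and then glue.
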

Thanks to the orbifold Chern character isomorphism constructed by Jarvis--Kaufmann--Kimura in \cite{MR2285746}, MHRC also implies the K-theoretic Hyper-K\"ahler Resolution Conjecture of \loccit.
Conjecture \ref{conj:MHRC} is proven in our joint work with Charles Vial \cite{MHRCKummer} for Hilbert schemes of abelian varieties and generalized Kummer varieties and in \cite{MHRCK3} for Hilbert schemes of K3 surfaces.

\paragraph{\textbf{Case 2: Surface minimal resolution}}
The second one is the main purpose of the article, namely the surface case, \ie $\dim(Y)=2$. In this case, the vanishing of quantum corrections is explained in the following lemma.

\begin{lemma}\label{lemma:qcVanish}
Let $X$ be a surface with Du Val singularities and $\pi: Y\to X$ be the minimal resolution. Then the virtual fundamental class of $\overline{M_{0,3}}\left(Y,\beta\right)$ is rationally equivalent to zero for any curve class $\beta$ which is contracted by $\pi$.
\end{lemma}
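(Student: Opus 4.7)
The plan is to exploit that the minimal resolution $Y$ is holomorphically symplectic in an analytic neighborhood of the exceptional locus and to apply the cosection localization technique of Kiem--Li. Abbreviate $\rN \deff \overline{M_{0,3}}(Y,\beta)$. First I would note that any class $\beta$ contracted by $\pi$ is supported on the exceptional divisor $E\deff \pi^{-1}(\Sing X)$, so every geometric point of $\rN$ corresponds to a stable map $f\colon C\to Y$ whose image is contained in $E$. Since $X$ has Du Val singularities---\'etale-locally of the form $\C^2/G$ with $G\subset \SL_2(\C)$---the $G$-invariant form $dx\wedge dy$ on $\C^2$ descends to a holomorphic symplectic form on $X^{sm}$, and the crepancy $K_Y=\pi^*K_X$ (equivalently $K_Y\cdot E_i=0$ for every exceptional component) ensures that this form extends uniquely to a nowhere-degenerate holomorphic $2$-form $\omega$ on some analytic open neighborhood $U$ of $E$ in $Y$.

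Next, I would use $\omega$ to construct a cosection of the obstruction sheaf on $\rN$. Let $\rho\colon \rC \to \rN$ denote the universal curve and $\tilde f\colon \rC \to Y$ the universal stable map, which factors through $U$ by the previous step. The isomorphism $\omega^{\flat}\colon T_Y|_U \xrightarrow{\sim} \Omega_Y|_U$ composed with the canonical maps $\tilde f^*\Omega_Y \to \Omega_{\rC/\rN}\to \omega_{\rC/\rN}$ yields a morphism
\[
\sigma\colon R^1\rho_*\tilde f^*T_Y \xrightarrow{\omega^{\flat}} R^1\rho_*\tilde f^*\Omega_Y \longrightarrow R^1\rho_*\omega_{\rC/\rN}\isom \sO_{\rN}
\]
from the obstruction sheaf of the standard perfect obstruction theory on $\rN$ (in the sense of Behrend--Fantechi / Li--Tian) to its structure sheaf.

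To finish via cosection localization, the key remaining point is to verify that $\sigma$ is surjective at every closed point of $\rN$. For a stable map $f\colon C\to Y$ representing such a point, the condition $\beta\ne 0$ forces $f$ to be non-constant on at least one irreducible component $C'\subset C$; on this component the codifferential $f^*\Omega_Y|_{C'}\to \omega_{C'}$ is non-zero, and composing with the pointwise non-degenerate $\omega^{\flat}$ produces a non-zero linear functional $H^1(C, f^*T_Y)\to \C$. Hence $\sigma$ is everywhere surjective, its degeneracy locus is empty, and Kiem--Li's cosection localization theorem forces the virtual fundamental class $[\rN]^{vir}$ to vanish in $\CH_*(\rN)_{\Q}$, which is precisely the claim.

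The main obstacle I expect is technical: verifying the pointwise surjectivity carefully at the boundary of $\rN$, where the source $C$ acquires nodes and contracted ``bubble'' components carrying the marked points, and making sure the Kiem--Li formalism is compatible with a cosection built from a symplectic form defined only on the analytic neighborhood $U$ rather than globally on $Y$. This compatibility is ultimately harmless because the universal stable map factors entirely through $U$, so the obstruction-theoretic input genuinely sees only $T_Y|_U\cong \Omega_Y|_U$, but it does require rewriting the perfect obstruction theory in a form that uses only this local datum.
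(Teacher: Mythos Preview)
Your approach via cosection localization is sound in outline and, modulo the technical points you yourself flag (surjectivity at nodal boundary strata, and the analytic-versus-algebraic issue for the neighborhood $U$), it would go through. However, the paper takes a much shorter route that avoids symplectic geometry entirely.

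The paper's argument is a pure virtual-dimension count. One considers the forgetful--stabilization morphism
\[
\overline{M_{0,3}}(Y,\beta)\longrightarrow \overline{M_{0,0}}(Y,\beta)
\]
and uses the standard compatibility that the virtual class upstairs is the pullback of the virtual class downstairs. The expected dimension of $\overline{M_{0,0}}(Y,\beta)$ is $(\beta\cdot K_Y)+(\dim Y-3)=0+(2-3)=-1$, the first term vanishing by crepancy of $\pi$. A class in $\CH_{-1}$ is zero, so $[\overline{M_{0,0}}(Y,\beta)]^{vir}=0$, and hence its pullback $[\overline{M_{0,3}}(Y,\beta)]^{vir}$ vanishes as well.

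As for what each approach buys: the paper's proof is a two-line argument exploiting the accident that $\dim Y-3<0$, so it is specific to surfaces and does not survive in higher dimension. Your cosection argument is precisely the mechanism behind the hyper-K\"ahler case the paper discusses just before this lemma (``Case 1''), and it generalizes whenever a symplectic form is available near the exceptional locus. So your route is more conceptual and more robust, but here it is overkill: the surface case admits an elementary bypass that sidesteps the boundary and analytic subtleties you correctly anticipate.
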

\begin{proof}
Consider the forgetful-stabilization morphism $$f: \overline{M_{0,3}}\left(Y,\beta\right)\to \overline{M_{0,0}}\left(Y,\beta\right).$$ By the general theory, the virtual fundamental class of $\overline{M_{0,3}}\left(Y,\beta\right)$ is the pull-back of the virtual fundamental class of $\overline{M_{0,0}}\left(Y,\beta\right)$. However, the virtual dimension of $\overline{M_{0,0}}\left(Y,\beta\right)$ is $(\beta\cdot K_{Y})+(\dim Y-3)=-1$ since $\pi$ is crepant. Therefore, both moduli spaces have zero virtual fundamental class in Chow group, cohomology or K-theory. 
\end{proof}

Thanks to the vanishing of quantum corrections, the motivic version of the Crepant Resolution Conjecture \ref{conj:CCRC} for surfaces is exactly the content of our main Theorem \ref{thm:main}. See the precise statement in Introduction. We will first give the proof for stacks which are finite group global quotients in \S \ref{sect:ProofQuotient}, then the proof in the general case in \S \ref{sect:ProofGeneral}.

\section{Proof of Theorem \ref{thm:main} : global quotient case}\label{sect:ProofQuotient}
In this section, we show Theorem \ref{thm:main} in the following setting: $S$ is a smooth projective surface, $G$ is a finite group acting faithfully on $S$ such that the canonical bundle is locally preserved (Gorenstein condition), $X\deff S/G$ is the quotient surface (with Du Val singularities) and $Y\to X$ is the minimal (crepant) resolution. Recall that $\L\deff \1(-1)$ is the Lefschetz motive in $\CHM$ (\S \ref{subsect:Motives}).

For any $x\in S$, let $$G_{x}\deff \{g\in G ~\vert~ gx=x\}$$ be the stabilizer. Let $\Irr(G_{x})$ be the set of isomorphism classes of irreducible representations of $G_{x}$ and $\Irr'(G_{x})$ be that of non-trivial ones. We remark that by assumption, there are only finitely many points of $S$ with non-trivial stabilizer. 

\subsection{Resolution side}
We first compute the Chow motive algebra (or Chow ring) of the minimal resolution $Y$. 

For any $x\in S$, we denote by $\bar x$ its image in $S/G$.
The Chow motive of $Y$ has the following decomposition in $\CHM$\,:
\begin{equation}\label{eqn:decompRes}
\h\left(Y\right)\isom \h(S)^{G}\oplus\bigoplus_{\bar x\in S/G}\bigoplus_{\rho\in \Irr'(G_{x})}\L_{\bar x, \rho}\isom \left(\h(S)\oplus\bigoplus_{x\in S}\bigoplus_{\rho\in \Irr'(G_{x})}\L_{x, \rho}\right)^{G},
\end{equation}
 where $\L_{\bar x, \rho}$ and $\L_{x,\rho}$ are both the Lefschetz motive $\L$ corresponding to the irreducible component of the exceptional divisor over $\bar x$, indexed by the non-trivial irreducible representation $\rho$ of $G_{x}$ via the classical McKay correspondence. The second isomorphism in \eqref{eqn:decompRes} being just a trick of reindexing, let us explain a bit more on the first one. Let $f:Y\to S/G$ be the minimal resolution of singularities. By the classical McKay correspondence, over each singular point $\bar x\in S/G$, the exceptional divisor $E_{\bar x}:=f^{-1}(\bar x)$ is a union (with A-D-E configuration) of smooth rational curves $\cup_{\rho\in \Irr'(G_{x})}E_{\bar{x},\rho}$. As $f$ is obviously a semi-small morphism, we can invoke the motivic decomposition of De Cataldo--Migliorini \cite[Theorem 1.0.1]{MR2067464}, with the stratification being $S/G=(S/G)_{reg}\cup \Sing(S/G)$, to obtain directly the first isomorphism in \eqref{eqn:decompRes}. It is then not hard to follow the proof in \loccit to see that the first isomorphism in  \eqref{eqn:decompRes} is induced by the pull-back $f^{*}={}^{t}\Gamma_{f}:\h(S)^{G}=\h(S/G)\to \h(Y)$ together with the push-forward along the inclusions $\L=\tilde{ \h}(E_{\bar{x}, \rho})\xrightarrow{i_{\bar x, \rho,*}} \h(Y)$, where $\tilde{ \h}$ is the reduced motive (see \S\ref{subsect:Motives}). We remark that the inverse of the isomorphism \eqref{eqn:decompRes} is more complicated to describe and involves the inverse of the intersection matrix (\cf the definition of $\Gamma'$ in the end of \cite[\S 2]{MR2067464}).

The product structure of $\h(Y)$ is determined as follows via the above decomposition \eqref{eqn:decompRes}, which also uses the classical McKay correspondence. Let $i_{x}:\{x\}\inj S$ be the natural inclusion. 
\begin{itemize}
\item $\h(S)\otimes \h(S)\lra{\delta_{S}} \h(S)$ is the usual product induced by the small diagonal of $S^{3}$.
\item For any $x$ with non-trivial stabilizer $G_{x}$ and any $\rho\in\Irr'(G_{x})$, $$\h(S)\otimes \L_{x,\rho}\lra{i_{x}^{*}} \L_{x,\rho} $$  is determined by the class $x\in \CH^{2}(S)=\Hom(\h(S)\otimes \L, \L)$.
\item For any $\rho\in \Irr'(G_{x})$ as above, $$\L_{x,\rho}\otimes \L_{x,\rho}\lra{-2i_{x, *}} \h(S),$$ is determined by $-2x\in\CH^{2}(S)$. The reason is that each component of the exceptional divisor is a smooth rational curve of self-intersection number equal to $-2$.
\item For any $\rho_{1}\neq \rho_{2}\in \Irr'(G_{x})$, 
\begin{itemize}
\item If they are \emph{adjacent}, that is, $\rho_{1}$ appears (with multiplicity 1) in the $G_{x}$-module $\rho_{2}\otimes T_{x}S$, then by the classical McKay correspondence, the components in the exceptional divisor over $\bar x$ indexed by $\rho_{1}$ and by $\rho_{2}$ intersect transversally at one point. Therefore $$\L_{x,\rho_{1}}\otimes \L_{x,\rho_{2}}\lra{i_{x, *}} \h(S),$$ is determined by $x\in\CH^{2}(S)$.
\item If they are not adjacent, then again the classical McKay correspondence tells us that the two components indexed by $\rho_{1}$ and $\rho_{2}$ of the exceptional divisor do not intersect\,; hence $\L_{x,\rho_{1}}\otimes \L_{x,\rho_{2}}\lra{0} \h(S)$ is the zero map.
\end{itemize}
\item The other multiplication maps are zero.
\end{itemize}

The $G$-action on (\ref{eqn:decompRes}) is as follows: 
\begin{itemize}
\item The $G$-action of $\h(S)$ is induced by the original action on $S$.
\item For any $h\in G$, it maps for any $x\in S$ and $\rho\in \Irr'(G_{x})$, the Lefschetz motive $\L_{x, \rho}$ isomorphically to $\L_{hx, h\rho}$, where $h\rho \in\Irr'(G_{hx})$ is the representation which makes the following diagram commutes:
\begin{equation}\label{diag:ActRep}
\xymatrix{
G_{x} \ar[rr]_{\isom}^{g\mapsto hgh^{-1}}\ar[dr]_{\rho} && G_{hx} \ar[dl]^{h\rho}\\
&V_{\rho}&
}
\end{equation}

\end{itemize}

\subsection{Orbifold side}
Now we compute the orbifold Chow motive algebra of the quotient stack $[S/G]$. The computation is quite straight-forward. Here $\L\deff \1(-1)$ is the Lefschetz motive. 

First of all, it is easy to see that $\age(g)=1$ for any element $g\neq \id$ of $G$, and $\age(\id)=0$. By Definition \ref{def:OrbTheo},
\begin{equation}\label{eqn:OrbMotDec}
\h(S, G)=\h(S)\oplus\bigoplus_{\substack{g\in G\\g\neq \id}}\bigoplus_{x\in S^{g}}\L_{x,g}=\h(S)\oplus\bigoplus_{x\in S}\bigoplus_{\substack{g\in G_{x}\\g\neq \id}}\L_{x,g},
\end{equation}
where $\L_{x,g}$ is the Lefschetz motive $\1(-1)$ indexed by the fixed point $x$ of $g$.
\begin{lemma}[Obstruction class]\label{lemma:ObsClass}
For any $g, h\in G$ different from $\id$, the obstruction class is
 $$c_{g,h}={\begin{cases}1 ~~~\text{ if }~~~ g=h^{-1}\\ 0 ~~~\text{ if }~~~ g\neq h^{-1}\end{cases}}$$
\end{lemma}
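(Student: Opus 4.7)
The plan is to reduce $c_{g,h}$ to a pointwise rank count on the fixed locus $S^{\langle g, h\rangle} = S^g \cap S^h$. First I would observe that for any non-trivial element $g \in G$, the Gorenstein condition forces the induced action on each tangent space $T_xS$ at a fixed point $x$ to lie in $\SL(T_xS) \setminus \{\id\}$, so the two eigenvalues $\zeta, \zeta^{-1}$ satisfy $\zeta \neq 1$. Hence $g$ has no non-zero invariant tangent direction, the fixed locus $S^g$ is a finite set of isolated reduced points, and $\age(g) = \frac{1}{r} + \frac{r-1}{r} = 1$. In particular, for non-trivial $g, h$, the scheme $S^{\langle g, h\rangle}$ is either empty or $0$-dimensional.

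Next, I would compute the virtual rank of the obstruction class $F_{g,h} \in K_0(S^{\langle g, h\rangle})_{\Q}$ directly from its defining formula \eqref{eqn:ObsBun}. At any closed point $x \in S^{\langle g, h \rangle}$, writing $g_3 = (gh)^{-1}$, we get
\begin{equation*}
\mathrm{rank}\, F_{g,h}|_x = \age(g) + \age(h) + \age(g_3) + \dim S^{\langle g, h\rangle} - \dim S = \age(g_3),
\end{equation*}
using $\dim S^{\langle g, h\rangle} = 0$, $\dim S = 2$, and $\age(g) = \age(h) = 1$.

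Two cases now split. If $g = h^{-1}$ then $g_3 = \id$ and $\age(g_3) = 0$, so $F_{g,h}$ has virtual rank $0$ and $c_{top}(F_{g,h}) = c_0(F_{g,h}) = 1$ on every connected component. If $g \ne h^{-1}$, then $g_3 \ne \id$ and $\age(g_3) = 1$, so $F_{g,h}$ has strictly positive virtual rank on a $0$-dimensional scheme; since the Picard group of a reduced $0$-dimensional scheme over an algebraically closed field is trivial, all Chern classes in positive degree vanish, yielding $c_{top}(F_{g,h}) = 0$. The case of empty $S^{\langle g,h\rangle}$ is vacuous. The only mildly delicate point is to interpret $c_{top}$ for a $K$-theory class rather than an honest bundle, but on a $0$-dimensional scheme any representative gives the same answer, which depends only on the virtual rank; I do not expect this to pose a real obstacle.
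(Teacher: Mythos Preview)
Your argument is correct and follows essentially the same route as the paper: both compute the virtual rank of $F_{g,h}$ on the $0$-dimensional fixed locus $S^{\langle g,h\rangle}$, obtain rank $0$ when $g=h^{-1}$ and rank $1$ otherwise, and conclude by noting that positive-degree Chern classes vanish on a $0$-dimensional scheme. Your write-up is slightly more explicit in justifying why $S^g$ is finite and why $\age(g)=1$ (the paper states these as immediate consequences of the eigenvalues being conjugate and $\neq 1$), but the substance is the same; one minor quibble is that your formula $\age(g)=\tfrac{1}{r}+\tfrac{r-1}{r}$ tacitly assumes the eigenvalues are primitive $r$-th roots, whereas in general they are $e^{2\pi i j/r}$ and $e^{2\pi i (r-j)/r}$ for some $1\le j\le r-1$, but of course $\tfrac{j}{r}+\tfrac{r-j}{r}=1$ all the same.
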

\begin{proof}
For any $g\neq \id$ and any $x\in S^{g}$, the action of $g$ on $T_{x}S$ is diagonalizable with a pair of conjugate eigenvalues, therefore $V_{g}$ in Definition \ref{def:OrbTheo} is a trivial vector bundle of rank one on $S^{g}$. Hence for any $g, h\in G$ different from $\id$ and $x\in S$ fixed by $g$ and $h$, the dimension of the fiber of the obstruction bundle $F_{g,h}$ at $x$ is
$$\dim F_{g,h}(x)=\dim V_{g}(x)+\dim V_{h}(x)+\dim V_{{(gh)}^{-1}}(x)-\dim T_{x}S,$$
which is 1 if $g\neq h^{-1}$ and is 0 if $g=h^{-1}$. The computation of $c_{g,h}$ follows.
\end{proof}

Once the obstruction classes are computed, we can write down explicitly the orbifold product from Definition \ref{def:OrbTheo}, which is summarized in the following proposition. 
\begin{prop}\label{prop:OrbProd}
The orbifold product on $\h(S, G)$ is given as follows via the decomposition (\ref{eqn:OrbMotDec}):
\begin{eqnarray*}
\h(S)\otimes \h(S)&\lra{\delta_{S}}& \h(S);\\
\h(S)\otimes \L_{x,g}&\lra{i_{x}^{*}}& \L_{x,g} ~~\forall gx=x;\\
\L_{x,g}\otimes \L_{x,g^{-1}}&\lra{i_{x, *}}& \h(S).\\ 
\end{eqnarray*}
where the first morphism is the usual product given by small diagonal; the second and the third morphisms are given by the class $x\in \CH^{2}(S)$ and $i_{x}: \{x\}\inj S$ is the natural inclusion; all the other possible maps are zero. 
\end{prop}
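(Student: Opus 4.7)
The plan is to apply the motivic definition of the orbifold product from Definition \ref{def:OrbTheo}(5) termwise on the decomposition \eqref{eqn:OrbMotDec}. Recall that on the $(g,h)$-block $\h(S^g)(-\age(g)) \otimes \h(S^h)(-\age(h)) \to \h(S^{gh})(-\age(gh))$, the product is induced by the correspondence $\delta_*(c_{top}(F_{g,h}))$, where $\delta \colon S^{\langle g,h \rangle} \hookrightarrow S^g \times S^h \times S^{gh}$ is the triple diagonal. The verification will proceed by a case analysis on $(g,h)$, driven by Lemma \ref{lemma:ObsClass} together with its immediate extension to the cases when at least one of $g,h$ equals $\id$.

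First I would handle $g = h = \id$: the virtual bundle $F_{\id,\id}$ is literally the zero bundle on $S$, so $c_{top}(F)=1$ and $\delta$ is the small diagonal of $S^3$, yielding exactly the usual product $\delta_S$ on $\h(S)$. Next, for $g = \id$ with $h \neq \id$ (and its symmetric partner), since $S^h$ is a finite set of isolated fixed points, the virtual bundle $F_{\id,h} = V_{\id} + V_h + V_{h^{-1}} + TS^h - TS|_{S^h}$ has virtual rank $0 + 1 + 1 + 0 - 2 = 0$ at each $x \in S^h$, hence $c_{top}(F_{\id,h}) = 1$; pushing $1$ forward along $\delta \colon \{x\} \hookrightarrow S \times \{x\} \times \{x\}$ gives the class $[x] \in \CH^2(S)$, which is precisely the morphism $i_x^* \in \Hom_{\CHM}(\h(S) \otimes \L_{x,h}, \L_{x,h})$. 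For $g, h \neq \id$ sharing a common fixed point $x$, Lemma \ref{lemma:ObsClass} gives $c_{top}(F_{g,h}) = 0$ unless $h = g^{-1}$; in the surviving case $c_{top}(F_{g,g^{-1}}) = 1$ and $\delta_*(1)$ along $\{x\} \hookrightarrow \{x\} \times \{x\} \times S$ again gives $[x] \in \CH^2(S)$, now realizing $i_{x,*}$. All remaining summand-to-summand maps vanish, either because the relevant component of $S^{\langle g,h \rangle}$ is empty (for distinct base points $x \neq y$) or because $c_{top}(F_{g,h}) = 0$ by Lemma \ref{lemma:ObsClass}.

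Since the obstruction classes are already controlled by Lemma \ref{lemma:ObsClass}, there is no substantial obstacle here. The main, essentially bookkeeping, point is to check that $\delta_*(c_{top}(F_{g,h}))$ lands in the correct graded piece $\CH^{\dim S^g + \age(g) + \age(h) - \age(gh)}$ of $\CH^*(S^g \times S^h \times S^{gh})$ that corresponds to a morphism in $\CHM$ between the appropriately Tate-twisted motives; since $\age$ takes only the values $0$ and $1$ and every $S^g$ with $g \neq \id$ is zero-dimensional, this is a trivial arithmetic verification in each case. Once the shifts line up, identifying $\delta_*(1)$ with $[x] \in \CH^2(S)$ (and with the small diagonal in the $g = h = \id$ case) is immediate from the definition of composition of correspondences, and the proposition follows.
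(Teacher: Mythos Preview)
Your proposal is correct and follows exactly the approach the paper intends: the paper does not give a separate proof but simply remarks that once the obstruction classes are computed in Lemma \ref{lemma:ObsClass}, the orbifold product can be written down explicitly from Definition \ref{def:OrbTheo}. Your case analysis on $(g,h)$, including the easy extension of the obstruction computation to the cases where one of $g,h$ is the identity, is precisely this verification spelled out in detail.
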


The $G$-action on (\ref{eqn:OrbMotDec}) is as follows by Definition \ref{def:OrbTheo}:
\begin{itemize}
\item The $G$-action on $\h(S)$ is the original action.
\item For any $h\in G$, it maps for any $x\in S$ and $g\neq \id \in G_{x}$, the Lefschetz motive $\L_{x,g}$ isomorphically to $\L_{hx, hgh^{-1}}$.
\end{itemize}

\subsection{The multiplicative correspondence}
With both sides of the correspondence computed, we can give the \emph{multiplicative McKay correspondence} morphism, which is in the category $\CHM_{\C}$ of complex Chow motives. Consider the morphism

\begin{equation}\label{eqn:MKCorr}
\Phi: \h(S)\oplus\bigoplus_{x\in S}\bigoplus_{\rho\in \Irr'(G_{x})}\L_{x, \rho} \to  \h(S)\oplus\bigoplus_{x\in S}\bigoplus_{\substack{g\in G_{x}\\g\neq \id}}\L_{x,g},
\end{equation}
which is given by the following `block diagonal matrix':
\begin{itemize}
\item $\id: \h(S)\to \h(S)$;
\item For each $x\in S$ (with non-trivial stabilizer $G_{x}$), the morphism $$\bigoplus_{\rho\in \Irr'(G_{x})}\L_{x, \rho}\to \bigoplus_{\substack{g\in G_{x}\\g\neq \id}}\L_{x,g}$$ is the `matrix' with coefficient 
$\frac{1}{\sqrt{|G_{x}|}}\sqrt{\chi_{\rho_{0}}(g)-2}\cdot\chi_{\rho}(g)$ at place $(\rho, g)\in \Irr'(G_{x})\times (G_{x}\backslash \{\id\})$,  where $\chi$ denotes the character, $\rho_{0}$ is the natural 2-dimensional representation $T_{x}S$ of $G_{x}$. Note that $\rho_{0}(g)$ has determinant 1, hence its trace $\chi_{\rho_{0}}(g)$ is a real number.
\item The other morphisms are zero.
\end{itemize}

To conclude the main theorem, one has to show three things: $(i)$ $\Phi$ is compatible with the $G$-action;  $(ii)$ $\Phi$ is multiplicative and $(iii)$ $\Phi$ induces an isomorphism $\Phi^{G}$ of complex Chow motives on $G$-invariants. 

\begin{lemma}\label{lemma:Equiv}
$\Phi$ is $G$-equivariant.
\end{lemma}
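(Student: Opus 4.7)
The plan is a direct computation using the block-diagonal structure of $\Phi$. Since $\Phi$ acts as the identity on $\h(S)$ and the $G$-action on the $\h(S)$ summand coincides on both sides (it is the action induced from $G\curvearrowright S$ in each description), equivariance on that summand is immediate. The content is therefore the verification on the Lefschetz summands, where one must compare $\Phi(h\cdot \L_{x,\rho})$ with $h\cdot \Phi(\L_{x,\rho})$ for every $h\in G$, $x\in S$ and $\rho\in \Irr'(G_{x})$.

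First I would recall the two actions explicitly: on the resolution side, $h$ carries $\L_{x,\rho}$ isomorphically onto $\L_{hx,h\rho}$, where $h\rho\in\Irr'(G_{hx})$ is characterized by the commutative triangle \eqref{diag:ActRep}, hence satisfies the character identity $\chi_{h\rho}(hgh^{-1})=\chi_{\rho}(g)$ for all $g\in G_{x}$. On the orbifold side, $h$ maps $\L_{x,g}$ to $\L_{hx,hgh^{-1}}$. Moreover, the natural two-dimensional representation $\rho_{0}=T_{x}S$ is intertwined with $T_{hx}S$ through the differential $Dh\colon T_{x}S\lra{\isom} T_{hx}S$, which is $G_{x}\isom G_{hx}$-equivariant along conjugation by $h$; consequently $\chi_{T_{hx}S}(hgh^{-1})=\chi_{\rho_{0}}(g)$.

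Next I would compute both sides. Applying $\Phi$ to the $h$-translate gives
\[
\Phi(\L_{hx,h\rho})=\sum_{g'\in G_{hx}\setminus\{\id\}}\frac{1}{\sqrt{|G_{hx}|}}\sqrt{\chi_{T_{hx}S}(g')-2}\,\chi_{h\rho}(g')\,\L_{hx,g'},
\]
and the change of variables $g'=hgh^{-1}$, together with the two character identities recalled above and $|G_{hx}|=|G_{x}|$, rewrites this sum as
\[
\sum_{g\in G_{x}\setminus\{\id\}}\frac{1}{\sqrt{|G_{x}|}}\sqrt{\chi_{\rho_{0}}(g)-2}\,\chi_{\rho}(g)\,\L_{hx,hgh^{-1}}.
\]
On the other hand, $h\cdot\Phi(\L_{x,\rho})$ is by definition obtained from $\Phi(\L_{x,\rho})=\sum_{g}\tfrac{1}{\sqrt{|G_{x}|}}\sqrt{\chi_{\rho_{0}}(g)-2}\,\chi_{\rho}(g)\,\L_{x,g}$ by sending each $\L_{x,g}$ to $\L_{hx,hgh^{-1}}$, producing the same expression term by term.

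The only non-trivial point is the compatibility of the character weights under the simultaneous relabelling of the source and target, which reduces to the conjugation-invariance of characters applied both to the chosen irreducible representation $\rho$ and to the tangent representation $\rho_{0}$; no further input is required. This establishes $\Phi\circ h=h\circ\Phi$ on every summand, hence on the full direct sum, proving the lemma.
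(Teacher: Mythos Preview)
Your proof is correct and follows essentially the same approach as the paper's: both reduce to the Lefschetz summands, compute $\Phi$ applied to the $h$-translate via the change of variables $g'=hgh^{-1}$, and invoke the character identity $\chi_{h\rho}(hgh^{-1})=\chi_{\rho}(g)$ coming from \eqref{diag:ActRep}. If anything, you are slightly more careful than the paper in distinguishing $T_{x}S$ from $T_{hx}S$ and explaining why their characters match under conjugation via $Dh$; the paper silently writes $\chi_{\rho_{0}}$ at both points.
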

\begin{proof}
The $G$-action on the first direct summand $\h(S)$ is by definition the same, hence is preserved by $\Phi|_{\h(S)}=\id$. For the other direct summands, since it is a matrix computation, we can treat the Lefschetz motives as 1-dimensional vector spaces: let $E_{x,\rho}$ be the `generator' of $\L_{x,\rho}$ and $e_{x, g}$ be the `generator' of $\L_{x,g}$. 
Then the $G$-actions computed in the previous subsections say that for any $x$ and any $h\in G_{x}$,
$$h. E_{x,\rho}=E_{hx, h\rho}~~~~~~\text{ and }~~~~~~~~h.e_{x,g}=e_{hx, hgh^{-1}},$$
where $h\rho$ is defined in (\ref{diag:ActRep}).

Therefore 
\begin{eqnarray*}
&&\Phi(h.E_{x, \rho})\\
&=&\Phi(E_{hx, h\rho})\\
&=&\frac{1}{\sqrt{|G_{hx}|}}\sum_{g\in G_{hx}}\sqrt{\chi_{\rho_{0}}(g)-2}\,\chi_{h\rho}(g)\, e_{hx,g}\\
&=& \frac{1}{\sqrt{|G_{x}|}}\sum_{g\in G_{x}}\sqrt{\chi_{\rho_{0}}(g)-2}\,\chi_{h\rho}(hgh^{-1})\, e_{hx,hgh^{-1}}\\
&=& \frac{1}{\sqrt{|G_{x}|}}\sum_{g\in G_{x}}\sqrt{\chi_{\rho_{0}}(g)-2}\,\chi_{\rho}(g)\, e_{hx,hgh^{-1}}\\
&=&\frac{1}{\sqrt{|G_{x}|}}\sum_{g\in G_{x}}\sqrt{\chi_{\rho_{0}}(g)-2}\,\chi_{\rho}(g)\, h.e_{x,g}\\
&=&h.\Phi(E_{x, \rho}),
\end{eqnarray*}
where the third equality is a change of variable: replace $g$ by $hgh^{-1}$, the fourth equality follows from the definition of $h\rho$ in (\ref{diag:ActRep})
\end{proof}

\begin{prop}[Multiplicativity]\label{prop:Multiplicativity}
$\Phi$ preserves the multiplication, \ie $\Phi$ is a morphism of algebra objects in $\CHM_{\C}$.
\end{prop}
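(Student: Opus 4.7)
The plan is to verify multiplicativity of $\Phi$ block-by-block along the decompositions \eqref{eqn:decompRes} and \eqref{eqn:OrbMotDec}. Since $\Phi$ restricts to the identity on $\h(S)$ and sends each Lefschetz summand $\L_{x,\rho}$ to a linear combination of Lefschetz summands $\L_{x,g}$ \emph{supported at the same fixed point $x$}, several blocks are immediate. The product $\h(S)\otimes\h(S)\to\h(S)$ matches trivially; the mixed products $\h(S)\otimes\L_{x,*}$ match because on both sides $\h(S)$ acts by multiplication by the class $x\in\CH^{2}(S)$, which commutes with the scalar combination defining $\Phi|_{\L_{x,\rho}}$; and for $x\neq x'$ the cross-point pairings $\L_{x,\rho_{1}}\otimes\L_{x',\rho_{2}}$ vanish on both sides (the resolution product rules listed after \eqref{eqn:decompRes} and the orbifold product rules of Proposition \ref{prop:OrbProd} are both zero when the two fixed points are distinct).

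The substantive step will be the same-point Lefschetz--Lefschetz pairing. Fix a stacky point $x\in S$ and $\rho_{1},\rho_{2}\in\Irr'(G_{x})$. By Proposition \ref{prop:OrbProd}, the only surviving terms in the double expansion of $\Phi(E_{x,\rho_{1}})\star\Phi(E_{x,\rho_{2}})$ are those with $h=g^{-1}$, each contributing the class of $x$ in $\h(S)$, so the orbifold product reduces to
$$\Phi(E_{x,\rho_{1}})\star\Phi(E_{x,\rho_{2}}) \;=\; \frac{x}{|G_{x}|}\sum_{g\in G_{x}\setminus\{\id\}}\sqrt{\chi_{\rho_{0}}(g)-2}\,\sqrt{\chi_{\rho_{0}}(g^{-1})-2}\;\chi_{\rho_{1}}(g)\,\chi_{\rho_{2}}(g^{-1}).$$
The key reduction would exploit $G_{x}\subset\SL_{2}(\C)$: the trace $\chi_{\rho_{0}}$ is then real and invariant under $g\mapsto g^{-1}$, so for any fixed branch of the complex square root the product of the two radicals equals $\chi_{\rho_{0}}(g)-2$; and the term $g=\id$ may be adjoined at no cost since $\chi_{\rho_{0}}(\id)-2=0$. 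Character orthogonality would then collapse the coefficient of $x$ to
$$\langle\rho_{0}\otimes\rho_{1},\rho_{2}\rangle\;-\;2\,\delta_{\rho_{1},\rho_{2}}.$$

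Finally, I would compare with the resolution product of $\L_{x,\rho_{1}}\otimes\L_{x,\rho_{2}}$: the classical McKay correspondence identifies $\langle\rho_{0}\otimes\rho_{1},\rho_{2}\rangle$ with the intersection number $E_{x,\rho_{1}}\cdot E_{x,\rho_{2}}$, which equals $0$ when $\rho_{1}=\rho_{2}$ (the McKay quiver of a subgroup of $\SL_{2}$ has no self-loops), $1$ when $\rho_{1}\neq\rho_{2}$ are adjacent, and $0$ otherwise. Coupled with the $-2\delta_{\rho_{1},\rho_{2}}$ correction---which encodes exactly the self-intersection $-2$ of each exceptional $(-2)$-curve---this coefficient agrees precisely with the bulleted description of the resolution product. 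The main obstacle I anticipate is the bookkeeping around the branch of $\sqrt{\chi_{\rho_{0}}(g)-2}$ (strictly negative for $g\neq\id$), together with recognising that the diagonal defect $-2\delta_{\rho_{1},\rho_{2}}$ arising naturally on the orbifold side is engineered exactly to match the $(-2)$ self-intersection on the resolution side; once these are in place the rest is character-theoretic bookkeeping combined with McKay.
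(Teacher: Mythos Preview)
Your proposal is correct and follows essentially the same route as the paper's proof: both dispose of the $\h(S)\otimes\h(S)$ and $\h(S)\otimes\L_{x,\rho}$ blocks as immediate, then reduce the same-point Lefschetz--Lefschetz pairing via the orthogonality relation \eqref{eqn:e}, simplify the radicals using that $\chi_{\rho_{0}}$ is real (hence invariant under $g\mapsto g^{-1}$), and collapse the resulting sum by character orthogonality to $\langle\rho_{0}\otimes\rho_{1},\rho_{2}\rangle-2\langle\rho_{1},\rho_{2}\rangle$, matching \eqref{eqn:E}. Your attention to the branch choice for $\sqrt{\chi_{\rho_{0}}(g)-2}$ and to the harmless insertion of the $g=\id$ term is more explicit than the paper's parenthetical remark, but the argument is the same.
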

\begin{proof}
The cases of multiplying $\h(S)$ with itself or with a Lefschetz motive $\L_{x,\rho}$ are all obviously preserved by $\Phi$. We only need to show that for any $x\in S$ with non-trivial stabilizer $G_{x}$, the morphism $$\bigoplus_{\rho\in \Irr'(G_{x})}\L_{x, \rho}\to \bigoplus_{\substack{g\in G_{x}\\g\neq \id}}\L_{x,g}$$ given by the matrix with coefficient $\frac{1}{\sqrt{|G_{x}|}}\sqrt{\chi_{\rho_{0}}(g)-2}\cdot\chi_{\rho}(g)$ at place $(\rho, g)$ is multiplicative (note that the result of the multiplication could go outside of these direct sums to $\h(S)$). Since this is just a matrix computation, let us treat Lefschetz motives as 1-dimensional vector spaces (or equivalently, we are looking at the corresponding multiplicativity of the realization of $\Phi$ for Chow rings): let $E_{x,\rho}$ be the `generator' of $\L_{x,\rho}$ and $e_{x, g}$ be the `generator' of $\L_{x,g}$. Then the computations of the products in the previous two subsections say that:
\begin{eqnarray}
\label{eqn:E}
E_{x,\rho_{1}}\cdot E_{x,\rho_{2}}&=&\begin{cases}
-2x ~~~\text{  if  }  \rho_{1}=\rho_{2};\\
x ~~~~\text{  if  }  \rho_{1}, \rho_{2} \text{ are adjacent};\\
0 ~~~~\text{  if  }  \rho_{1},\rho_{2} \text{ are not adjacent};\\
\end{cases}\\
\label{eqn:e}
e_{x,g}\cdot e_{x,h}&=&\begin{cases}
x ~~~\text{  if  }  g=h^{-1};\\
0 ~~~~\text{  if  }  g\neq h^{-1};\\
\end{cases}
\end{eqnarray}

Therefore for any $\rho_{1}, \rho_{2}\in \Irr'(G_{x})$, we have 
\begin{eqnarray*}
&&\Phi(E_{x,\rho_{1}})\cdot \Phi(E_{x,\rho_{2}})\\
&=&\frac{1}{|G_{x}|}\sum_{g\in G_{x}}\sum_{h\in G_{x}}\sqrt{\chi_{\rho_{0}}(g)-2}\sqrt{\chi_{\rho_{0}}(h)-2}\,\chi_{\rho_{1}}(g)\chi_{\rho_{2}}(h)\, e_{x,g}\cdot e_{x,h}\\
&=&\frac{1}{|G_{x}|}\sum_{g\in G_{x}}\sqrt{\chi_{\rho_{0}}(g)-2}\sqrt{\chi_{\rho_{0}}(g^{-1})-2}\,\chi_{\rho_{1}}(g)\chi_{\rho_{2}}(g^{-1})\cdot x\\
&=&\frac{1}{|G_{x}|}\sum_{g\in G_{x}}(\chi_{\rho_{0}}(g)-2)\,\chi_{\rho_{1}}(g)\overline{\chi_{\rho_{2}}(g)}\cdot x\\
&=&\frac{1}{|G_{x}|}\left(\sum_{g\in G_{x}}\chi_{\rho_{0}\otimes \rho_{1}}(g)\overline{\chi_{\rho_{2}}(g)}-2\sum_{g\in G_{x}}\chi_{\rho_{1}}(g)\overline{\chi_{\rho_{2}}(g)}\right)\cdot x\\
&=& \left(\langle \rho_{0}\otimes\rho_{1}, \rho_{2}\rangle - 2\langle \rho_{1}, \rho_{2}\rangle \right)\cdot x\\
&=& \Phi\left(E_{x,\rho_{1}}\cdot E_{x,\rho_{2}}\right)
\end{eqnarray*}
where the first equality is the definition of $\Phi$ (and we add the non-existent $e_{x,1}$ with coefficient 0), the second equality uses (\ref{eqn:e}) the orthogonality among $e_{x,g}$'s (\ie $\L_{x,g}$'s), the third equality uses the fact that $\chi_{\rho_{0}}$ takes real value; the last equality uses all three cases of (\ref{eqn:E}).
\end{proof}

\begin{prop}[Additive isomorphism]\label{prop:AddIsom}
Taking $G$-invariants on both sides of (\ref{eqn:MKCorr}), $\Phi^{G}$ is an isomorphism of complex Chow motives between $\h(Y)$ and $\h_{orb}([S/G])$.
\end{prop}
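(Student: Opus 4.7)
The plan is to check the statement summand by summand, using the explicit decompositions \eqref{eqn:decompRes} and \eqref{eqn:OrbMotDec}. The $G$-equivariance of $\Phi$ (Lemma \ref{lemma:Equiv}) makes $\Phi^{G}$ a well-defined morphism of complex Chow motives; since $\Phi$ is the identity on the $\h(S)$ summand, that part contributes the identity automorphism of $\h(S)^{G}$, which accounts for $\h(X)\subset \h(Y)$ and the untwisted sector of $\h_{orb}([S/G])$. It therefore suffices to analyze the morphism on the remaining Lefschetz summands, which splits as a direct sum over $G$-orbits of points $x\in S$ with non-trivial stabilizer $G_{x}$.

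First I would reduce to a fixed orbit representative $x$: standard descent identifies the $G$-invariants of $\bigoplus_{y\in Gx}(\,\cdot\,)_{y}$ with the $G_{x}$-invariants at $x$. So the claim becomes that the block
\[
\Phi_{x}\colon \bigoplus_{\rho\in\Irr'(G_{x})}\L_{x,\rho}\longrightarrow \bigoplus_{g\in G_{x}\setminus\{\id\}}\L_{x,g}
\]
induces an isomorphism on $G_{x}$-invariants, where $G_{x}$ acts on the left by $\rho\mapsto h\rho$ (which is an inner conjugation on $G_{x}$, hence trivial on isomorphism classes of irreducible representations) and on the right by $g\mapsto hgh^{-1}$. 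Consequently the left-hand side is already pointwise fixed, while the right-hand side has invariants canonically indexed by the non-trivial conjugacy classes of $G_{x}$ via the averaged generators $\tilde e_{C}:=\sum_{g\in C}e_{x,g}$. In particular, both invariant spaces have the same rank $|\Irr'(G_{x})|=|\mathrm{Conj}'(G_{x})|$, so invertibility is a nontrivial square matrix assertion.

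Next I would compute $\Phi_{x}^{G_{x}}$ in these bases. Since $\chi_{\rho_{0}}$ and $\chi_{\rho}$ are class functions, one obtains
\[
\Phi_{x}^{G_{x}}(E_{x,\rho})=\frac{1}{\sqrt{|G_{x}|}}\sum_{C\in\mathrm{Conj}'(G_{x})}\sqrt{\chi_{\rho_{0}}(g_{C})-2}\,\chi_{\rho}(g_{C})\,\tilde e_{C},
\]
for any choice of representative $g_{C}\in C$. The resulting matrix factors as $D_{1}\cdot T\cdot D_{2}$, where $T=(\chi_{\rho}(g_{C}))$ is the truncated character table (non-trivial rows and columns), $D_{1}$ is the scalar $|G_{x}|^{-1/2}$, and $D_{2}=\mathrm{diag}(\sqrt{\chi_{\rho_{0}}(g_{C})-2})$. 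The second orthogonality relation gives invertibility of $T$; the diagonal factor $D_{2}$ is invertible because for every $g\neq\id$ in $G_{x}\subset\SL_{2}(\C)$ the eigenvalues of $\rho_{0}(g)$ are conjugate roots of unity $\lambda,\lambda^{-1}$ with $\lambda\neq 1$, so $\chi_{\rho_{0}}(g)=\lambda+\lambda^{-1}<2$, and in particular $\chi_{\rho_{0}}(g)-2\neq 0$ (the square root is purely imaginary, which is exactly why we must work with complex coefficients).

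The main conceptual obstacle is the bookkeeping in step one: matching invariants of a $G$-representation indexed by pairs $(x,\rho)$ or $(x,g)$ with the summands appearing in \eqref{eqn:decompRes} and \eqref{eqn:OrbMotDec} requires care because the $G_{x}$-action on the representation side is inner (trivial on iso classes) while on the group-element side it is genuine conjugation. Once this is unravelled, the proof becomes a finite-dimensional linear algebra statement governed entirely by the character theory of $G_{x}$, and the non-vanishing $\chi_{\rho_{0}}(g)\neq 2$ is exactly the input that makes the rescaled character table isomorphism from \cite{MR2357679} available in our motivic setting.
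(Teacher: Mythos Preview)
Your argument follows the paper's second, elementary proof almost verbatim: pass to a $G$-orbit representative, note that the $G_{x}$-action on the $\Irr'(G_{x})$-indexed summands is trivial (inner conjugation), identify the invariants on the group-element side with conjugacy classes, and factor the resulting square matrix as a nonzero diagonal times the truncated character table $T$. The observation $\chi_{\rho_{0}}(g)<2$ for $g\neq\id$ is exactly the paper's.

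The one soft spot is the sentence ``the second orthogonality relation gives invertibility of $T$.'' Column orthogonality makes the \emph{full} character table invertible, but deleting the trivial-representation row and the identity column is not an innocent operation: the truncated columns are no longer orthogonal (their pairwise inner products become $-1$), so a separate argument is needed to see that $T$ remains nonsingular. The paper isolates this as Lemma~\ref{lemma:CharTable} and proves it via the regular character; you should either invoke that lemma or supply the short argument (e.g.\ via the cofactor of the $(\text{trivial},\id)$ entry in the inverse of the full table).

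For comparison, the paper also gives a first proof that bypasses the truncated character table entirely: the multiplicativity already established in Proposition~\ref{prop:Multiplicativity} means the realization of $\Phi_{x}^{G_{x}}$ intertwines the (non-degenerate) intersection form on the $E_{x,\rho}$ with the (non-degenerate) orbifold pairing on the $G_{x}$-invariant $e$'s, and a linear map between equal-dimensional spaces preserving non-degenerate bilinear forms is automatically invertible. Your route is more self-contained in that it does not rely on Proposition~\ref{prop:Multiplicativity}; the paper's first route is slicker once multiplicativity is in hand.
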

\begin{proof}
We should prove the following morphism is an isomorphism:
\begin{equation*}
\Phi^{G}: \h(S)^{G}\oplus\left(\bigoplus_{x\in S}\bigoplus_{\rho\in \Irr'(G_{x})}\L_{x, \rho}\right)^{G} \to  \h(S)^{G}\oplus\left(\bigoplus_{x\in S}\bigoplus_{\substack{g\in G_{x}\\g\neq \id}}\L_{x,g}\right)^{G}.
\end{equation*}
Since $\Phi$ is given by `block diagonal matrix', it amounts to show that for each $x\in S$ (with $G_{x}$ non trivial),
the following is an isomorphism :
\begin{equation}\label{eqn:AddIsom}
\bigoplus_{\rho\in\Irr'(G_{x})}\L_{x,\rho}\to \left(\bigoplus_{\substack{g\in G_{x}\\g\neq \id}}\L_{x,g}\right)^{G_{x}}.
\end{equation}
which is equivalent to say that the following square matrix is non-degenerate\,: 
\begin{equation}\label{eqn:matrix}
\left(\sqrt{\chi_{\rho_{0}}(g)-2}\cdot\chi_{\rho}(g)\right)_{(\rho, [g])},
\end{equation}
where $\rho$ runs over the set $\Irr'(G_{x})$ of isomorphism classes of non-trivial irreducible representations and $[g]$ runs over the set of conjugacy classes of $G_{x}$ different from $\id$.

As this is about a matrix, it is enough to look at the realization of (\ref{eqn:AddIsom}):
\begin{equation*}
\bigoplus_{\rho\in\Irr'(G_{x})}E_{x,\rho}\to \left(\bigoplus_{\substack{g\in G_{x}\\g\neq \id}}e_{x,g}\right)^{G_{x}},
\end{equation*}
where both sides come equipped with non-degenerate quadratic forms given by intersection numbers and degrees of the orbifold product respectively. More precisely, by (\ref{eqn:E}) and (\ref{eqn:e}):
\begin{eqnarray*}
(E_{x,\rho_{1}}\cdot E_{x,\rho_{2}})&=&\begin{cases}
-2 ~~~\text{  if  }  \rho_{1}=\rho_{2};\\
1 ~~~~\text{  if  }  \rho_{1}, \rho_{2} \text{ are adjacent};\\
0 ~~~~\text{  if  }  \rho_{1},\rho_{2} \text{ are not adjacent};\\
\end{cases}\\
(e_{x,g}\cdot e_{x,h})&=&\begin{cases}
1 ~~~\text{  if  }  g=h^{-1};\\
0 ~~~~\text{  if  }  g\neq h^{-1};\\
\end{cases}
\end{eqnarray*}
which are both clearly non-degenerate. Now Proposition \ref{prop:Multiplicativity} shows that our matrix (\ref{eqn:matrix}) respects the non-degenerate quadratic forms on both sides, therefore it is non-degenerate.

Let us note here also an elementary proof which does not use the orbifold product. We first remark that for any $g\neq \id$, $\rho_{0}(g)\in \SL_{2}(\C)$ which is of finite order and different from the identity, hence its trace $\chi_{{0}}(g)\neq 2$.  Therefore the nondegeneracy of the matrix (\ref{eqn:matrix}) is equivalent to the nondegeneracy of the matrix $$\left(\chi_{\rho}(g)\right)_{(\rho, [g])},$$
which is obtained from the character table of the finite group $G_{x}$ by removing the first row (corresponding to the trivial representation) and the first column (corresponding to $\id\in G_{x}$). The nondegeneracy of this matrix is a completely general fact, which holds for all finite groups. We will give a proof in Lemma \ref{lemma:CharTable} at the end of this section.
\end{proof}

The combination of Lemma \ref{lemma:Equiv}, Proposition \ref{prop:Multiplicativity} and Proposition \ref{prop:AddIsom} proves the isomorphism of algebra objects (\ref{eqn:main}) in the main Theorem \ref{thm:main} in the global quotient case. For the isomorphisms for the Chow rings and cohomology rings, it is enough to apply realization functors. For the isomorphisms for the K-theory and topological K-theory, it suffices to invoke the construction of \emph{orbifold Chern characters} in \cite{MR2285746} which induce isomorphisms of algebras from (orbifold) K-theory to (orbifold) Chow ring as well as from (orbifold) topological K-theory to (orbifold) cohomology ring. The proof of Theorem \ref{thm:main} in the global quotient case is now complete. \hfill $\square$

The following lemma is used in the second proof of Proposition \ref{prop:AddIsom}. The elegant proof below is 
due to C\'edric Bonnaf\'e. We thank him for allowing us to use it. Recall that for a finite group $G$, its \emph{character table} is a square matrix whose rows are indexed by isomorphism classes of irreducible complex representations of $G$ and columns are indexed by conjugacy classes of $G$. 
\begin{lemma}\label{lemma:CharTable}
Let $G$ be any finite group. Then the matrix obtained from the character table by removing the first row corresponding to the trivial representation and the first column corresponding to the identity element, is non-degenerate.
\end{lemma}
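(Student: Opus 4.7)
The plan is to prove linear independence of the rows of the truncated character table by constructing the corresponding class function and showing it must vanish identically. Let $M$ denote the $(k-1) \times (k-1)$ matrix obtained from the character table by deleting the row of the trivial character $\chi_{1}$ and the column of the identity conjugacy class $[e]$, where $k$ is the number of conjugacy classes of $G$.

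First I would suppose, for contradiction, that the rows of $M$ are linearly dependent: there exist complex numbers $a_{\rho}$, indexed by $\rho \in \Irr'(G)$, not all zero, such that
\begin{equation*}
\sum_{\rho \in \Irr'(G)} a_{\rho}\, \chi_{\rho}(g) = 0 \quad \text{for every } g \in G \setminus \{e\}.
\end{equation*}
Form the class function $f \deff \sum_{\rho \in \Irr'(G)} a_{\rho}\, \chi_{\rho}$ on $G$. By construction $f(g) = 0$ for all $g \neq e$, so the only potentially nonzero value of $f$ is $f(e)$.

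Next I would exploit the orthogonality of irreducible characters. Since $f$ is a $\C$-linear combination of \emph{non-trivial} irreducible characters, and these are orthogonal to $\chi_{1}$ under the standard inner product on class functions, one has
\begin{equation*}
0 = \langle f, \chi_{1} \rangle = \frac{1}{|G|}\sum_{g \in G} f(g)\, \overline{\chi_{1}(g)} = \frac{1}{|G|}\sum_{g \in G} f(g) = \frac{f(e)}{|G|},
\end{equation*}
the last equality using that $f$ vanishes off $\{e\}$. Hence $f(e) = 0$, and therefore $f \equiv 0$ on $G$. The linear independence of the irreducible characters then forces $a_{\rho} = 0$ for every $\rho \in \Irr'(G)$, contradicting our assumption.

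The argument is essentially one short calculation; the only conceptual point is the interplay between row-orthogonality and the observation that erasing the trivial row and the identity column pairs up the two "redundant" pieces of data compatibly. A symmetric argument using column orthogonality (or the Schur complement applied to the full character table, whose determinant is known to be nonzero) would work equally well, but the approach above is the most direct. No step appears to present a genuine obstacle.
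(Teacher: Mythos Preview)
Your proof is correct and follows essentially the same approach as the paper: both show that a linear combination $f=\sum_{\rho\in\Irr'(G)}a_{\rho}\chi_{\rho}$ vanishing off the identity must in fact vanish everywhere, and then invoke linear independence of irreducible characters. The only cosmetic difference is that the paper phrases the key step through the regular character $\chi_{reg}$ (which is supported on $\{e\}$ and contains $\chi_{1}$ with multiplicity one) rather than taking the inner product $\langle f,\chi_{1}\rangle$ directly, but these are the same computation.
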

\begin{proof}
Denote by $\1$ the trivial representation and by $\rho_{1}, \cdots, \rho_{n}$ the set of isomorphism classes of non-trivial representations of $G$. Suppose we have a linear combination $\sum_{i=1}^{n}c_{i}\chi_{\rho_{i}}$, with $c_{i}\in \C$, which vanishes for all non-identity conjugacy class, hence for all non-identity elements of $G$:
\begin{equation}\label{eqn:LinDep}
\sum_{i=1}^{n}c_{i}\chi_{\rho_{i}}(g)=0, ~~~~~~~\forall g\neq \id \in G.
\end{equation}
Set $$c_{0}\deff-\frac{1}{|G|}\sum_{i=1}^{n}c_{i}\dim(\rho_{i}),$$ and denote by $\chi_{reg}$ be the character of the regular representation, then (\ref{eqn:LinDep}) implies that the following linear combination vanishes for all $g\in G$: $$c_{0}\chi_{reg}+\sum_{i=1}^{n}c_{i}\chi_{\rho_{i}}=0.$$
If $c_{0}\neq 0$, it contradicts to the fact that the trivial representation should appear (with multiplicity 1) in the regular representation.\\
Hence we have $c_{0}=0$. Then by the linear independency among the characters of irreducible representations, we must have $c_{1}=\cdots=c_{n}=0$.
\end{proof}

\section{Proof of Theorem \ref{thm:main} : general orbifold case}\label{sect:ProofGeneral}
In this section, we give the proof of Theorem \ref{thm:main} in the full generality. As the proof goes essentially in the same way as the global quotient case in \S \ref{sect:ProofQuotient}, we will focus on the different aspects of the proof and refer to the arguments in \S \ref{sect:ProofQuotient} whenever possible.  

Recall the setting: $\rX$ is a two-dimensional Deligne--Mumford stack with only finitely many points with non-trivial stabilizers\,; $X$ is the underlying (projective) singular surface with only Du Val singularities and $Y\to X$ is the minimal resolution. For each $x\in X$, denote by $G_{x}$ its stabilizer, which is contained in  $\SL_{2}$.

Throughout this section, Chow groups of stacks are as in \cite{MR1005008} and Chow motives of stacks or singular $\Q$-varieties are as in \cite[\S 2]{MR1784411}.
\subsection{Resolution side}
Similar to (\ref{eqn:decompRes}), we have the following decomposition given by the classical McKay correspondence (see Introduction):
\begin{equation}
\h(Y)\isom \h(X)\oplus \bigoplus_{x\in X}\bigoplus_{\rho\in \Irr'(G_{x})}\L_{x, \rho}
\end{equation}
and the multiplication is the following:

\begin{itemize}
\item $\h(X)\otimes \h(X)\lra{\delta_{X}} \h(X)$ is the usual intersection product.
\item For any $\rho\in \Irr'(G_{x})$, $\h(X)\otimes \L_{x,\rho}\lra{i_{x}^{*}} \L_{x,\rho} $  is given by the class $x\in \CH^{2}(X)=\Hom(\h(X)\otimes \L, \L)$.
\item For any $\rho\in \Irr'(G_{x})$, $\L_{x,\rho}\otimes \L_{x,\rho}\lra{-2i_{x, *}} \h(X),$ is determined by $-2x\in\CH^{2}(X)$.
\item For any $\rho_{1}\neq \rho_{2}\in \Irr'(G_{x})$, 
\begin{itemize}
\item If they are \emph{adjacent}, that is, $\rho_{1}$ appears (with multiplicity 1) in the $G_{x}$-module $\rho_{2}\otimes \C^{2}$, where $\C^{2}$ is such that $\C^{2}/G_{x}$ is the singularity type of $x$, then $$\L_{x,\rho_{1}}\otimes \L_{x,\rho_{2}}\lra{i_{x, *}} \h(X),$$ is determined by $x\in\CH^{2}(X)$.
\item If they are not adjacent, then $\L_{x,\rho_{1}}\otimes \L_{x,\rho_{2}}\lra{0} \h(X)$ is the zero map.
\end{itemize}
\item The other multiplication maps are zero.
\end{itemize}

\subsection{Orbifold side}
Similar to (\ref{eqn:OrbMotDec}), we have 
\begin{equation}\label{eqn:DecompStack}
\h(\rX)=\h(X)\oplus\bigoplus_{x\in X}\left(\bigoplus_{\substack{g\in G_{x}\\g\neq \id}}\L_{x,g}\right)^{G_{x}},
\end{equation}
where the action of $G_{x}$ is by conjugacy.

Note that degree $0$ twisted stable maps with $3$ marked points to $\mathcal{X}$ are either untwisted stable maps to $\mathcal{X}$ or a twisted map to one of the stacky points of $\mathcal{X}$. 
In the latter case, the irreducible components of the moduli space around these twisted stable maps and the obstruction bundle are the same as those of the twisted stable maps to the orbifold $[\mathbb{C}^2/G]$. 
It is then clear that the orbifold product can be described as if $\mathcal{X}$ is a global quotient. 
Therefore the  orbifold product on $\h(\rX)$ is given by the following, via (\ref{eqn:DecompStack}):
\begin{itemize}
\item $\h(X)\otimes \h(X) \lra{\delta_{S}} \h(X)$ is the usual intersection product.
\item For all $g\in G_{x}$, $\h(X)\otimes \L_{x,g} \lra{i_{x}^{*}}  \L_{x,g}$ determined by the class of $x\in X$.
\item For all $g\in G_{x}$, $\L_{x,g}\otimes \L_{x,g^{-1}} \lra{i_{x, *}} \h(X)$ determined by the class of $x\in X$.
\item The other multiplication maps are zero.
\end{itemize}

\subsection{The multiplicative isomorphism}
Similar to (\ref{eqn:MKCorr}), we define

\begin{equation}
\phi: \h(X)\oplus\bigoplus_{x\in X}\bigoplus_{\rho\in \Irr'(G_{x})}\L_{x, \rho} \to  \h(X)\oplus\bigoplus_{x\in X}\bigoplus_{\substack{g\in G_{x}\\g\neq \id}}\L_{x,g},
\end{equation}
which is given by the following `block matrix':
\begin{itemize}
\item $\id: \h(X)\to \h(X)$;
\item For each $x\in X$ (with non-trivial stabilizer $G_{x}$), the morphism $$\bigoplus_{\rho\in \Irr'(G_{x})}\L_{x, \rho}\to \bigoplus_{\substack{g\in G_{x}\\g\neq \id}}\L_{x,g}$$ is the `matrix' with coefficient 
$\frac{1}{\sqrt{|G_{x}|}}\sqrt{\chi_{\rho_{0}}(g)-2}\cdot\chi_{\rho}(g)$ at place $(\rho, g)\in \Irr'(G_{x})\times (G_{x}\backslash \{\id\})$,  where $\chi$ denotes the character, $\rho_{0}$ is the natural 2-dimensional representation $\C^{2}$ of $G_{x}$ such that $\C^{2}/G_{x}$ is the singularity type of $x$. Note that $\rho_{0}(g)$ has determinant 1, hence its trace $\chi_{\rho_{0}}(g)$ is a real number.
\item The other morphisms are zero.
\end{itemize}
To conclude Theorem \ref{thm:main}, on the one hand, the same proof as in Proposition \ref{prop:Multiplicativity} shows that $\phi$ is multiplicative.
On the other hand, one sees immediately that $\phi$ factorizes through $$\h(X)\oplus\bigoplus_{x\in X}\left(\bigoplus_{\substack{g\in G_{x}\\g\neq \id}}\L_{x,g}\right)^{G_{x}}.$$ It is thus enough to show that the following induced map is an (additive) isomorphism:
\begin{equation*}
\psi: \h(X)\oplus\bigoplus_{x\in X}\bigoplus_{\rho\in \Irr'(G_{x})}\L_{x, \rho} \to  \h(X)\oplus\bigoplus_{x\in X}\left(\bigoplus_{\substack{g\in G_{x}\\g\neq \id}}\L_{x,g}\right)^{G_{x}},
\end{equation*}
However this follows from the proof of Proposition \ref{prop:AddIsom}, where one shows that (\ref{eqn:AddIsom}) is an isomorphism. The proof of Theorem \ref{thm:main} is complete. 
\qed

\bibliographystyle{amsplain}
\bibliography{biblio_fulie}

\end{document}